\tikzstyle{dot} = [inner sep=0pt,thick,fill=black,circle,minimum size=2.5pt]
\tikzstyle{line} = [draw, -latex]
\newtheorem{thm}{Theorem}[section]
\newtheorem{cor}[thm]{Corollary}
\newtheorem{lem}[thm]{Lemma}
\newtheorem{exm}{Example}
\newtheorem{defn}[thm]{Definition}
\newtheorem{rem}[thm]{Remark}
\newtheorem{defn-prop}[thm]{Definition-Proposition}
\newcommand{\qihao}{\fontsize{7.25pt}{\baselineskip}\selectfont}
\begin{document}

\begin{center}
{\Large \bf Cluster automorphism groups of cluster algebras of finite type
\footnote{Supported by the NSF of China (Grant 11131001)}}

\bigskip

{\large Wen Chang and
 Bin Zhu}

\bigskip

{Dedicated to the memory of Andrei Zelevinsky}

\bigskip


\end{center}

\begin{abstract}
We study the cluster automorphism group $Aut(\mathcal{A})$ of a coefficient free cluster algebra $\mathcal{A}$ of finite type. A cluster automorphism of $\mathcal{A}$ is a permutation of the cluster variable set $\mathscr{X}$ that is compatible with cluster mutations. We show that, on the one hand, by the well-known correspondence between $\mathscr{X}$ and the almost positive root system $\Phi_{\geq -1}$ of the corresponding Dynkin type, the piecewise-linear transformations $\tau_+$ and $\tau_-$ on $\Phi_{\geq -1}$ induce cluster automorphisms $f_+$ and $f_-$ of $\mathcal{A}$ respectively; on the other hand, excepting type $D_{2n} (n\geqslant 2)$, all the cluster automorphisms of $\mathcal{A}$ are compositions of $f_+$ and $f_-$. For a cluster algebra of type $D_{2n} (n\geqslant 2)$, there exists an exceptional cluster automorphism induced by a permutation of negative simple roots in $\Phi_{\geq -1}$, which is not a composition of $\tau_+$ and $\tau_-$. By using these results and folding a simply laced cluster algebra, we compute the cluster automorphism group for a non-simply laced finite type cluster algebra. As an application, we show that $Aut(\mathcal{A})$ is isomorphic to the cluster automorphism group of the $FZ$-universal cluster algebra of $\mathcal{A}$.
\end{abstract}

\def\s{\stackrel}
\def\Longrightarrow{{\longrightarrow}}

\def\ggz{\Gamma}
\def\bz{\beta}
\def\az{\alpha}
\def\gz{\gamma}
\def\da{\delta}
\def\zz{\zeta}
\def\thz{\theta}
\def\ra{\rightarrow}

\def\FS{\mathfrak{S}}
\def\F1{\mathfrak{F}}
\def\A{\mathcal{A}}
\def\B{\mathcal{B}}
\def\C{\mathcal{C}}
\def\D{\mathcal{D}}
\def\F{\mathcal{F}}
\def\H{\mathcal{H}}
\def\I{\mathcal {I}}
\def\P{\mathbb{P}}
\def\T{\mathbb{T}}
\def\R{\mathcal{R}}
\def\Si{\Sigma}
\def\S{\Sigma}
\def\L{\mathscr{L}}
\def\l{\mathcal{l}}
\def\U{\mathscr{U}}
\def\V{\mathscr{V}}
\def\W{\mathscr{W}}
\def\X{\mathscr{X}}
\def\Y{\mathscr{Y}}
\def\M{{\bf{Mut}}}
\def\MM#1{{\bf{(CM#1)}}}
\def\t{{\tau }}
\def\b{\textbf{d}}
\def\K{{\cal K}}
\def\righta{\rightarrow}
\def\G{{\Gamma}}
\def\x{\mathbf{x}}
\def\ex{{\mathbf{ex}}}
\def\fx{{\mathbf{fx}}}
\def\Aut{\mbox{Aut}}
\def\add{\mbox{add}}
\def\coker{\mbox{coker}}
\def\End{\mbox{End}}
\def\Ext{\mbox{Ext}}
\def\Gr{\mbox{Gr}}
\def\Hom{\mbox{Hom}}
\def\id{\mbox{id}}
\def\ind{\mbox{ind}}
\def\Int{\mbox{Int}}
\def\deg{\mbox{deg}}
\def \text{\mbox}
\def\m{\multiput}
\def\mul{\multiput}
\def\c{\circ}
\def \text{\mbox}
\def\t{\tilde}

\newcommand{\homeo}{\textup{Homeo}^+(S,M)}
\newcommand{\homeoo}{\textup{Homeo}_0(S,M)}
\newcommand{\mg}{\mathcal{MG}(S,M)}
\newcommand{\mmg}{\mathcal{MG}_{\bowtie}(S,M)}
\newcommand{\Z}{\mathbb{Z}}
\newcommand{\Q}{\mathbb{Q}}
\newcommand{\N}{\mathbb{N}}

\hyphenation{ap-pro-xi-ma-tion}

\textbf{Key words.} Cluster algebras; Universal cluster algebras; Root systems; Piecewise-linear transformations; Cluster automorphism groups; $\tau$ groups.
\medskip

\textbf{Mathematics Subject Classification.} 16S99; 16S70; 18E30

\tableofcontents

\section{Introduction}
Cluster algebras are introduced by Sergey Fomin and Andrei Zelevinsky in \cite{FZ02}; it has been showed that these algebras are linked to various areas of mathematics, see, for examples, \cite{GLS08, F10, L10, Re10, K12, M14}, and so on. However, as an algebra itself with combinatorial structure, it is natural and interesting to study the symmetries of a cluster algebra. For this, Assem, Schiffler and Shramchenko \cite{ASS12} introduced cluster automorphisms and the cluster automorphism group of a cluster algebra without coefficients. A cluster automorphism is an algebra automorphism which maps clusters to clusters, and commutes with the mutations. These concepts and some similar ones are studied in many papers \cite{S10,ASS12,BQ12,ASS13,BD13,KP13,N13, Z06,CZ15,CZ15b}. \\

It is well known that the classification of indecomposable cluster algebras of finite type corresponds to the Cartan-Killing classification of simple Lie algebras, equivalently, corresponds to the classification of root systems in Euclidean spaces\cite{FZ03}. More precisely, the set of cluster variables $\X$ of $\A$ is in bijection with the almost positive root set $\Phi_{\geq -1}$ of the corresponding root system. Note that a cluster automorphism of $\mathcal{A}$ is a permutation of the cluster variable set $\mathscr{X}$, which commutes with cluster mutations, so it is natural to ask what is the counter-part on the almost positive root system $\Phi_{\geq -1}$. Our first aim is to answer this question. For this we consider piecewise-linear transformations $\tau_+$ and $\tau_-$ on $\Phi_{\geq -1}$, which is introduced in \cite{FZ03a} to prove Zamolodchikov's periodicity conjecture that concerns Y-system\cite{Z91}. A Y-system is a class of recurrent functions defined by a Cartan matrix. These two transformations are kinds of `linearization' of recurrence relations in the Y-system. It is proved in \cite{FZ03a} that the group $D_\tau$ generated by $\tau_{\pm}$ is a dihedral group, and the finiteness of $D_\tau$ yields the periodicity of the Y-system.\\

Under the correspondence between $\mathscr{X}$ and $\Phi_{\geq -1}$, the piecewise-linear transformation $\tau_\pm$ on $\Phi_{\geq -1}$ induces a
permutation $f_\pm$ of $\mathscr{X}$. In subsection \ref{subsec:tau transform groups}, we show that, both $f_+$ and $f_-$ give cluster automorphisms of $\mathcal{A}$. Conversely, except type $D_{2n} (n\geqslant 2)$, all the cluster automorphisms of $\mathcal{A}$ are compositions of $f_+$ and $f_-$ (see Corollary \ref{cor:simply laced auto group and tau group} and Theorem \ref{thm:tau gp eq to auto gp for nonsimply laced}), and thus $Aut(\A)\cong D_{\tau}$. For a cluster algebra of type $D_{2n} (n\geqslant 2)$, there exists an exceptional cluster automorphism induced by a permutation of negative simple roots in $\Phi_{\geq -1}$, which is not generated by $\tau_+$ and $\tau_-$, and thus $Aut(\A)\cong D_{\tau}\times \Z_2$.\\

The cluster automorphism group of a simply laced cluster algebra of finite type is computed in \cite{ASS12}, by computing the automorphism group of the $AR$-quiver of the corresponding cluster category. We study the cluster automorphism group in subsection \ref{Cluster automorphism groups: non-simply laced cases} for a non-simply laced one by folding technique. The folding of a root system is a usual method in the studies of Lie algebras and quantum groups, it transforms a simply laced root system to a non-simply laced one. The folding technique is also used to study cluster algebras \cite{FZ03a,D08,Y09}. By using the results obtained in subsection \ref{subsec:tau transform groups} and folding a simply laced cluster algebra \cite{D08} (see arXiv:math/0512043v5 for an improved version of \cite{D08}), we compute in Theorem \ref{thm:tau gp eq to auto gp for nonsimply laced} the cluster automorphism group of a non-simply laced cluster algebra of finite type. \\

For a coefficient free cluster algebra $\A$, its universal cluster algebra $\A^{univ}$ is introduced in \cite{FZ07}, it is a universal object in the set of cluster algebras with principal part $\A$, in the view point of coefficient specialization. It follows from \cite{CZ15} that $\A^{univ}$ is gluing free, that is, any two coefficient rows in each exchange matrix of $\A^{univ}$ are not the same. Thus $Aut(\A^{univ})\subseteq Aut(\A)$. For a finite type cluster algebra, Fomin and Zelevinsky constructed a universal cluster algebra, which is a geometric cluster algebra with coefficients indexed by dual roots in ${\Phi^{\vee}}_{\geq -1}$ \cite{FZ07}. We call it the FZ-universal cluster algebra, and prove that $Aut(\A)\cong Aut(\A^{univ})$ in section \ref{Sec universal cluster algebra}.\\

The paper is organized as follows: we recall preliminaries on cluster algebras, cluster automorphisms and piecewise-linear transformations in section \ref{Preleminaries}. In section \ref{Sec root system}, we consider the relations between cluster automorphisms and piecewise-linear transformations, and compute the cluster automorphism groups of non-simply laced cluster algebras of finite type. We prove in section \ref{Sec universal cluster algebra} the isomorphism between the cluster automorphism group of a cluster algebra of finite type and the cluster automorphism group of its universal cluster algebra.

\section{Preliminaries}
\label{Preleminaries}

\subsection{Cluster algebras}
We recall basic definitions and properties on cluster algebras in this subsection.
\begin{defn}\cite{FZ02}(Labeled seeds).\label{def: labeled seeds}
A \emph{labeled seed} is a triple $\Sigma=(\ex,\fx,B)$, where
\begin{itemize}
\item  $\ex=\{x_{1},x_{2},\cdot \cdot \cdot ,x_{n}\}$ is a set with n elements;
\item  $\fx=\{x_{n+1},x_{n+2},\cdot \cdot \cdot ,x_{m}\}$ is a set with $m-n$ elements;
\item  $B=(b_{x_jx_i})_{m\times n}\in M_{m\times n}(\Z)$ is a matrix labeled by $(\ex\sqcup\fx)\times \ex$, and it is extended skew-symmetrizable, that is, there exists a diagonal matrix $D$ with positive integer entries such that $DB^{ex}$ is skew-symmetric, where ${B}^{ex}$ is a submatrix of $B$ consisting of the first n rows.
\end{itemize}
\end{defn}
The set $\x=\ex \sqcup \fx$ is the \emph{cluster} of $\S$, B is the \emph{exchange matrix} of $\S$. We also write $b_{ji}$ to an element $b_{x_jx_i}$ in $B$ for brevity.
The elements in $\x$ ($\ex$ and $\fx$ respectively) are the \emph{cluster variables} (the \emph{exchangeable variables} and the \emph{frozen variables} respectively) of $\S$. We also write $(\x,B)$ to a labeled seed $(\ex,\emptyset,B)$. The labeled seed $\S^{ex}=(\ex,{B}^{ex})$ is called the \emph{principal part} of $\S$. The rows of $B^{ex}$ are called \emph{exchangeable rows} of $B$, and the rest ones are called \emph{frozen rows} of $B$. We always assume through the paper that both $B$ and $B^{ex}$ are indecomposable matrices, and we also assume that $n>1$ for convenience. Given an exchangeable cluster variable $x_k$, we produce a new labeled seed by a mutation.

\begin{defn}\cite{ADS13,FZ02}(Seed mutations).\label{def: mutation}
The labeled seed $\mu_k(\S)=(\mu_k(\ex),\mu_k(\fx),\mu_k(B))$ obtained by the \emph{mutation} of $\S$ in the direction $k$ is given by:
\begin{itemize}
				\item $\mu_k(\ex) = (\ex \setminus \{x_k\}) \sqcup \{x'_k\}$ where
				$$x_kx'_k = \prod_{\substack{1\leqslant j\leqslant m~; \\ b_{jk}>0}} {x_j}^{b_{jk}} + \prod_{\substack{1\leqslant j\leqslant m~; \\ b_{jk}<0}} {x_j}^{-b_{jk}}.$$
				\item $\mu_k(\fx)=\fx$.
				\item $\mu_k(B)=(b'_{ji})_{m\times n} \in M_{m\times n}(\Z)$ is given by
					$$b'_{ji} = \left\{\begin{array}{ll}
						- b_{ji} & \textrm{ if } i=k \textrm{ or } j=k~; \\
						b_{ji} + \frac 12 (|b_{ji}|b_{ik} + b_{ji}|b_{ik}|) & \textrm{ otherwise.}
					\end{array}\right.$$	
\end{itemize}
It is easy to check that the mutation is an involution, that is $\mu_k\mu_k(\S)=\S$.
\end{defn}

\begin{defn}\cite{FZ07}(n-regular patterns).\label{def:n-regular patterns}
An n-regular tree $\T_n$ is diagram, whose edges are labeled by $1,2,\cdots,n$, such that the $n$ edges emanating from each vertex receive different labels.
A \emph{cluster pattern} is an assignment
of a labeled seed $\Sigma_t=(\ex_t, \fx_t, B_t)$ to every vertex $t \in \T_n$, so that the seeds assigned to the
endpoints of any edge labeled by $k$ are obtained from each
other by the seed mutation in direction~$k$.
The elements of $\Sigma_t$ are written as follows:
\begin{equation}
\label{eq:seed-labeling}
\ex_t = (x_{1;t}\,,\dots,x_{n;t})\,,\quad
\fx_t = (x_{n+1;t}\,,\dots,x_{m;t})\,,\quad
B_t = (b^t_{ij})\,.
\end{equation}
Clearly $\fx_t=\fx$ for any $t\in \T_n$. Denote $\x_t=\ex_t\sqcup\fx_t$. Note that $\T$ is in fact determined by any given labeled seed on it.
\end{defn}

Now we are ready to define cluster algebras.
\begin{defn}\cite{FZ07}(Cluster algebras).\label{def: cluster algebras}
Given a seed $\S$ and a cluster pattern $\T_n$ associated to it, we denote
\begin{equation}
\label{eq:cluster-variables}
\X
= \bigcup_{t \in \T_n} \x_t
= \{ x_{i,t}\,:\, t \in \T_n\,,\ 1\leq i\leq m \} \ ,
\end{equation}
the union of clusters of all the seeds in the pattern.
We call the elements $x_{i,t}\in \X$ the \emph{cluster variables}.
The \emph{cluster algebra} $\A$ associated with $\S$ is the $\Z$-subalgebra of the rational function field $\F=\Q(x_{1},x_{2},\cdot \cdot \cdot ,x_{m})$,
generated by all cluster variables: $\A = \Z[\X]$.
We call the elements in $\fx$ the coefficients of $\A$. We call the cluster algebra $\A^{ex}$ defined by $\S^{ex}$ the principal part of $\A$. Note that $\A^{ex}$ is coefficient free.
\end{defn}

\begin{rem}
\begin{enumerate}
\item Since we consider geometric cluster algebras, the most of above concepts are slightly different from the ones in \cite{FZ02,FZ07}. In particular, all the coefficients are non-invertible in $\A$.
\item For the exchange matrix $B$ in a labeled seed $\Sigma=(\ex,\fx,B)$, one can associate it to an ice valued quiver $Q(B)$ ($Q$ for brevity), whose vertices are labeled by cluster variables in $\x$, with frozen vertices labeled by frozen variables, and arrows and values are assigned by $B$ (see Example \ref{exm:quiver and matrix}, we refer to \cite{K12,CZ14} for details). Then the principal part $B^{ex}$ corresponds the principal part $Q^{ex}$ of $Q$, where $Q^{ex}$ is a valued quiver. We define the mutation of $Q$ at a vertex corresponding to $x\in\ex$ by the mutation of $B$ at $x$, that is, $\mu_{x}(Q)=Q(\mu_{x}(B))$. We also write $(\ex,\fx,Q)$ to the labeled seed $(\ex,\fx,B)$, and write $\A_Q$ to the cluster algebra defined by $\Sigma$.
\end{enumerate}
\end{rem}

\begin{exm}\label{exm:quiver and matrix}
Let $B$ be the following matrix, whose principal part is a skew-symmetrizable matrix with diagonal matrix $D=diag\{2,2,1,1\}$. The quiver corresponding to $B$ is $Q$, where we frame the frozen vertices.
\begin{center}
{\begin{tikzpicture}
\node[] (C) at (-3,-3)  {$B$
$~=~$
                        $\left(
                           \begin{array}{c}
                             B^{ex} \\
                             B' \\
                           \end{array}
                         \right)$
$~=~$
                        $\left(
                          \begin{array}{ccccc}
                            0 & 1 & 0 & 0\\
                            -1 & 0 & -1 & 0\\
                            0 & 2 & 0 & 2\\
                            0 & 0 & -2 & 0\\
                            0 & 0 & 0 & -1\\
                          \end{array}
                        \right)$};
\end{tikzpicture}}
\end{center}

\begin{center}
{\begin{tikzpicture}
\node[] (C) at (-2.5,0)  {$Q~:$};
\node[] (C) at (-1.5,0)  {$1$};
\node[] (C) at (0,0)  {$2$};	
\node[] (C) at (1.5,0)  {$3$};
\node[] (C) at (3,0)  {$4$};
\node[] (C) at (4.5,0)  {$\frame{5}$};
\node[] (C) at (0.77,0.3)  {\qihao{(2,1)}};
\draw[<-,thick] (-0.2,0) -- (-1.3,0);
\draw[<-,thick] (0.2,0) -- (1.3,0);
\draw[->,thick] (1.7,0.05) -- (2.8,0.05);
\draw[->,thick] (1.7,-0.05) -- (2.8,-0.05);
\draw[->,thick] (3.2,0) -- (4.3,0);
\end{tikzpicture}}
\end{center}
\end{exm}

\begin{defn}\cite{CZ15}(Gluing free labeled seeds).\label{def: gluing free labeled seeds}
Let $\Sigma=(\ex,\fx,B)$ be a labeled seed. We say that two frozen cluster variables $x_j$ and $x_k$ are \emph{strictly glueable}, if $b_{ji}= b_{ki}$ for any exchangeable cluster variable $x_i$. The labeled seed $\Sigma$ is called \emph{gluing free}, if any two frozen cluster variables are not strictly glueable.
\end{defn}
Gluing free labeled seeds are introduced in \cite{CZ15} to study cluster automorphisms of cluster algebras with coefficients. Note that a labeled seed is gluing free if and only if any two frozen rows of $B$ are different. Then a mutation of a gluing free labeled seed is still gluing free \cite{CZ15}. Thus we have the following well-defined gluing free cluster algebra.
\begin{defn}\cite{CZ15}(Gluing free cluster algebras).\label{def: gluing free cluster algebras}
We say a cluster algebra is \emph{gluing free}, if its labeled seeds are gluing free.
\end{defn}
\begin{defn}\cite{FZ07}(Seeds).
\label{def:seeds}
Given two labeled seeds $\Sigma=(\ex, \fx, B)$
and $\Sigma'=(\ex', \fx', B')$, we say that they
define the same \emph{seed}
if $\Sigma'$ is obtained from $\Sigma$ by simultaneous relabeling of the
sets $\ex$ and $\fx$ and the corresponding relabeling of
the rows and columns of~$B$.
\end{defn}
We denote by $[\S]$ the seed represented by a labeled seed $\S$. The cluster $\x$ of a seed $[\S]$ is an unordered $m$-element set. For any $x\in\ex$, there is a well-defined mutation $\mu_{x}([\S])=[\mu_{k}(\S)]$ of $[\S]$ at direction $x$, where $x=x_k$. For two extended skew-symmetrizable matrices $B$ and $B'$ of the same rank, we say $B\cong B'$, if $B'$ is obtained from $B$ by simultaneous relabeling of the exchangeable rows and corresponding columns and the relabeling of the frozen rows. Then two exchange matrices are isomorphic if they are in labeled seeds representing the same seed, and an isomorphism of two exchange matrices induces an isomorphism of corresponding ice valued quivers. For convenience, in the rest of the paper, we denote by $\S$ the seed $[\S]$ represented by $\S$.
\begin{defn}\cite{FZ07}(Exchange graphs).
\label{def:exchange-graph}
The \emph{exchange graph} of a cluster algebra is the $n$-regular graph whose vertices are the seeds of the cluster algebra and whose
edges connect the seeds related by a single mutation. We denote by $E_\A$ the exchange graph of a cluster algebra $\A$.
\end{defn}
Clearly, the exchange graph of a cluster algebra is a quotient graph of the exchange pattern, its vertices are equivalent classes of labeled seeds.
Note that the edges in the exchange graph lost the `color' of labels.
The exchange graph is not necessary a finite graph, if it is finite, then we say the corresponding cluster algebra (and its exchange pattern) is of \emph{finite type}. The classification of cluster algebras of finite type is given in \cite{FZ02}, they correspond to finite root system. We will recall the correspondence in the next subsection.\\

The following specialization is firstly considered by Fomin and Zelevinsky, and it is viewed as a kind of morphism between cluster algebras in \cite{ADS13}. We will use this concept in Lemma \ref{lem:relations between two groups} and Theorem \ref{thm:auto gp of finit type univ alg}.
\begin{defn-prop}\cite{FZ07,ADS13}\label{def: specialization}
Let $\A$ be a cluster algebra with a seed $(\ex,\fx,B)$, and $\A^{ex}$ is the principal part of $\A$ with a seed $(\ex,B^{ex})$. We define $S$ by $S(x)=x$ for $x\in \ex$ and $S(x)=1$ for $x\in \fx$, then it induces an algebra homomorphism $S'$ from $\A$ to $\A^{ex}$, we call it a specialization.
\end{defn-prop}

\subsection{Piecewise-linear transformations}

We recall bipartite seeds and piecewise-linear transformations of finite root system from \cite{FZ03a,FZ07}.

\begin{defn}\cite{FZ07}(Bipartite seeds).\label{def: Bipartite seeds}
We call a (labeled) seed $\S=(\ex,\fx, B)$ \emph{bipartite}, if the principal part $B^{ex}$ of $B=(b_{ji})$ is bipartite, that is, there exists a map $\varepsilon :[1, n]\to \{1,-1\}$ such that, for all $1\leqslant i,j \leqslant n$,
\begin{equation}
\label{eq:bipartite}
b_{ji} > 0 \Rightarrow
\begin{cases}
\varepsilon(j) = 1 \,, \\
\varepsilon(i) = -1\,.
\end{cases}
\end{equation}
\end{defn}

For a square matrix $B$, its \emph{Cartan counterpart}(see \cite{FZ03}(1.6)) is $A=A(B)=(a_{ij})$, where
\begin{equation}
\label{eq cartan}
a_{ij} =
\begin{cases}
2 & \text{if $i=j$;} \\ 
- |b_{ij}| & \text{if $i\neq j$.}
\end{cases}
\end{equation}
It is proved in \cite{FZ03}(Theorem 1.4) that a cluster algebra is of finite type if and only if there exists a seed of the cluster algebra, such that the Cartan counterpart of the principal part of its exchange matrix is a finite type Cartan matrix. In this subsection we always assume that $\S=(\x,B)$ is a bipartite seed without frozen variables, such that $A(B)$ is a finite type Cartan matrix. Then the valued quiver $Q$ is a bipartite quiver, that is, any vertex of $Q$ is a source or a sink.\\


Note that if $b_{ij}=0$, then $\mu_i\mu_j=\mu_j\mu_i$, thus we have the following well-defined compositions of mutations on $\S$:
\begin{equation}
\label{eq mupm}
\mu_+ = \prod_{\varepsilon(k) = 1} \mu_k \,,\qquad
\mu_- = \prod_{\varepsilon(k) = -1} \mu_k \,.
\end{equation}

Clearly, $\mu_\pm$ is an involution, and $\mu_\pm (B)=-B$, thus $\mu_\pm(\S)$ is also a bipartite seed.

\begin{defn}\cite{FZ07}(Bipartite belt)
\label{def: bipartite belt}
For $r > 0$, we define
\begin{align}
\label{eq:big-circle-seeds+}
\S_r=(\x_{r}, (-1)^r B) &=
\underbrace{\mu_\pm \cdots \mu_- \mu_+ \mu_-}_{r \text{~factors}}
(\S);\\
\label{eq:big-circle-seeds-}
\Sigma_{-r}=(\x_{- r}, (-1)^r B) &=
\underbrace{\mu_\mp \cdots \mu_+ \mu_- \mu_+}_{r \text{~factors}}
(\S).\end{align}
We call the belt consisting of these seeds $\S_r=(\x_{r}, (-1)^r B)$ the \emph{bipartite belt}. Denote $\x_r= (x_{1;r}, \dots, x_{n;r})$ for each $\S_r$.
\end{defn}

For the Cartan matrix $A$, write $\Pi=\{\alpha_1,\alpha_2,\cdots,\alpha_n\}$
to the set of its positive simple roots, with root lattice $E=\Z\Pi$. Define $\Phi_{\geq -1}$ as the almost positive root system, it consists of the positive roots and the negative simple roots of $A$. For any $i \in [1, n]=\{1,2,\cdot\cdot\cdot,n\}$, $s_i$ is the simple reflection in the corresponding Weyl group $W$, that is, $s_i(\alpha_j)=\alpha_j-a_{ij}\alpha_i$ for each $j \in [1, n]$. The Coxeter number of $W$ is $h$, and the longest element is $w_0$.
We denote a root $\alpha$ by $\sum_{j=1}^{n}[\alpha:\alpha_j]\alpha_j$, where $[\alpha:\alpha_j]$ is the coefficient of $\alpha$ corresponding to the positive simple root $\alpha_j$. For each element $\alpha=\sum_{j=1}^{n}[\alpha:\alpha_j]\alpha_j$ in $\Phi_{\geq -1}$, we define a vector ${\bf{d}}(\alpha)=([\alpha:\alpha_1]_+,\cdots,[\alpha:\alpha_n]_+)\in E$, where
$[\alpha:\alpha_j]_+=max([\alpha:\alpha_j],0)$. Then by the classification of finite type cluster algebras \cite{FZ03},(Theorem 1.9),
\begin{equation}\label{equ:cluster variables and roots}
\alpha\mapsto x_\alpha=\frac{P_\alpha(\x)}{\x^{\bf{d}(\alpha)}}
\end{equation}
gives a one-to-one correspondence between almost positive roots in $\Phi_{\geq -1}$ to the cluster variables of $\A$, where $P_\alpha$ is a polynomial with non-negative integer coefficients.

\begin{defn}\cite{FZ03a}\label{piecewise linear transformation}
A piecewise linear transformation $\sigma_i: E\to E$ is defined by:
\begin{equation}
\sigma_i(\alpha) =
\begin{cases}
\alpha & \text{if $\alpha=-\alpha_j\neq -\alpha_i$;} \\ 
s_{i}(\alpha) & \text{others.}
\end{cases}
\end{equation}
\end{defn}
Then for a root $\alpha=\sum_{j\in\bf{I}}[\alpha:\alpha_j]\alpha_j$,
\begin{equation}
\label{sigma i}
[\sigma_i(\alpha):\alpha_{i'}] =
\begin{cases}
[\alpha:\alpha_{i'}] & \text{if $i'\neq i$;} \\ 
-[\alpha:\alpha_{i}]-\sum_{j\neq i}a_{ij}[\alpha:\alpha_j]_+ & \text{if $i'=i$.}
\end{cases}
\end{equation}
From the definition, $\sigma_i\sigma_j=\sigma_j\sigma_i$ when $\varepsilon(i)=\varepsilon(j)$, and we define
$$\tau_{\pm}=\prod_{\varepsilon(i)={\pm}1}\sigma_i.$$
It is easy to check that
\begin{equation}
\label{tau}
[\tau_\varepsilon(\alpha):\alpha_{i}] =
\begin{cases}
[\alpha:\alpha_{i}] & \text{if $\varepsilon(i)\neq\varepsilon$;} \\ 
-[\alpha:\alpha_{i}]-\sum_{j\neq i}a_{ij}[\alpha:\alpha_j]_+ & \text{if $\varepsilon(i)=\varepsilon$.}
\end{cases}
\end{equation}
The following lemma can be checked straightforwardly:
\begin{lem}\label{lem:tau group}{\em[Proposition 2.4\cite{FZ03a}]}
\begin{enumerate}
\item Both transformations $\tau_+$ and $\tau_-$ are involutions, and preserve $\Phi_{\geq -1}$;
\item The bijection $\alpha\mapsto \alpha^{\vee}$ between $\Phi_{\geq -1}$ and  ${\Phi^{\vee}}_{\geq -1}$ is $\tau_{\pm}$ equivariant.
\end{enumerate}
\end{lem}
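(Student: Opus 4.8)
The statement to prove is Lemma~\ref{lem:tau group}, which is attributed to Proposition~2.4 of \cite{FZ03a}, so the plan is essentially to verify two assertions directly from the explicit piecewise-linear formulas established just above.

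\textbf{Part (1): $\tau_\pm$ are involutions preserving $\Phi_{\geq -1}$.} First I would check that each $\sigma_i$ preserves $\Phi_{\geq -1}$. By Definition~\ref{piecewise linear transformation}, $\sigma_i$ fixes every negative simple root $-\alpha_j$ with $j\neq i$, and on the remaining almost positive roots it agrees with the linear reflection $s_i$. The only roots that need care are $-\alpha_i$ and the positive roots: for a positive root $\alpha$, either $s_i(\alpha)$ is again a positive root, or $\alpha=\alpha_i$ and $s_i(\alpha_i)=-\alpha_i\in\Phi_{\geq-1}$; and $\sigma_i(-\alpha_i)=s_i(-\alpha_i)=\alpha_i$. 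So $\sigma_i(\Phi_{\geq-1})\subseteq\Phi_{\geq-1}$. Next, since $\varepsilon(i)=\varepsilon(j)$ forces $a_{ij}=0$ (the Cartan counterpart of a bipartite principal part has no edges within a part), the maps $\{\sigma_i:\varepsilon(i)=\pm1\}$ pairwise commute and each acts on disjoint coordinates in the sense made precise by \eqref{sigma i}; hence $\tau_\pm=\prod_{\varepsilon(i)=\pm1}\sigma_i$ is well defined, preserves $\Phi_{\geq-1}$, and its square is computed coordinatewise. Using formula \eqref{tau}: if $\varepsilon(i)\neq\varepsilon$ the $i$-th coordinate is untouched by $\tau_\varepsilon$, and if $\varepsilon(i)=\varepsilon$ then applying $\tau_\varepsilon$ twice sends $[\alpha:\alpha_i]$ to $-(-[\alpha:\alpha_i]-\sum_{j}a_{ij}[\alpha:\alpha_j]_+)-\sum_j a_{ij}[\alpha:\alpha_j]_+=[\alpha:\alpha_i]$, where crucially the coordinates $[\alpha:\alpha_j]_+$ for $j\neq i$ with $\varepsilon(j)=-\varepsilon$ are unchanged between the two applications (again because $a_{ij}=0$ whenever $\varepsilon(j)=\varepsilon$). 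So $\tau_\varepsilon^2=\id$.

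\textbf{Part (2): the duality $\alpha\mapsto\alpha^\vee$ is $\tau_\pm$-equivariant.} Recall that the bijection $\Phi_{\geq-1}\to\Phi^\vee_{\geq-1}$ matches positive roots with positive coroots and $-\alpha_i$ with $-\alpha_i^\vee$, and that the Cartan matrix $A^\vee=A(B^\vee)$ of the dual root system is the transpose $A^\top$, i.e. $a^\vee_{ij}=a_{ji}$. I would show that, in terms of coordinates in the respective simple (co)root bases, the two maps $\tau_\varepsilon$ and $\tau^\vee_\varepsilon$ are given by the same piecewise-linear recipe after accounting for this transpose, and that $\alpha\mapsto\alpha^\vee$ intertwines them. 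Concretely, write $\alpha^\vee=\sum_j c_j\alpha_j^\vee$; the relation between $c_j$ and $[\alpha:\alpha_j]$ involves the symmetrizing constants, but the sign pattern (which coordinates are positive, which root is a negative simple one) is preserved, and formula \eqref{tau} applied on the dual side only ever uses $a^\vee_{ij}=a_{ji}$ together with the positive parts of coordinates. Matching the two formulas reduces to the elementary identity that conjugating the piecewise-linear map by the diagonal rescaling $\alpha_j\mapsto\alpha_j^\vee$ turns $a_{ij}$ into $a_{ji}$; since the same bipartition $\varepsilon$ works for $A$ and $A^\top$, this gives $\tau^\vee_\varepsilon(\alpha^\vee)=(\tau_\varepsilon(\alpha))^\vee$.

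\textbf{Main obstacle.} Neither part is deep, but the one place that requires genuine care is the bookkeeping in Part~(2): one must be precise about how the coordinates of $\alpha$ in the basis $\Pi$ relate to those of $\alpha^\vee$ in the basis $\Pi^\vee$ (these differ by the symmetrizer $D$ on the short/long roots for non-simply-laced types), and check that the $\max(\cdot,0)$ operations and the minus signs in \eqref{tau} really do commute with this rescaling. In the simply laced case the equivariance is immediate since $\alpha^\vee=\alpha$ and $A^\top=A$; the content is entirely in the non-simply-laced types, where one verifies it either by the explicit folding relationship between a non-simply-laced root system and its simply-laced cover, or by a direct case check on the coordinate formula. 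I would present the argument uniformly via the coordinate formula \eqref{tau} and its dual analogue, remarking that the identity $a^\vee_{ij}=a_{ji}$ is exactly what makes the two recipes correspond.
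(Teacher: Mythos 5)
The paper gives no proof of this lemma at all: it is stated with the attribution to Proposition 2.4 of \cite{FZ03a} and the remark that it ``can be checked straightforwardly,'' so your write-up is being compared against a citation rather than an argument. Your Part (1) is correct and complete: checking that each $\sigma_i$ preserves $\Phi_{\geq -1}$, that the $\sigma_i$ within one part of the bipartition commute because $a_{ij}=0$ whenever $\varepsilon(i)=\varepsilon(j)$, and that $\tau_\varepsilon^2=\id$ follows coordinatewise from (\ref{tau}) once one notes that the coordinates entering the sum belong to the opposite part and are therefore unchanged between the two applications -- this is exactly the standard verification.

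For Part (2) your outline is right but the phrase ``conjugating by the diagonal rescaling $\alpha_j\mapsto\alpha_j^\vee$'' glosses over the one genuine point. The coordinate change is $[\alpha^\vee:\alpha_j^\vee]=\frac{(\alpha_j,\alpha_j)}{(\alpha,\alpha)}[\alpha:\alpha_j]$, and the scaling factor depends on the root $\alpha$ itself, not only on the index $j$; so this is not conjugation by a fixed linear diagonal map. To close the computation one needs (i) the identity $a_{ji}(\alpha_j,\alpha_j)=a_{ij}(\alpha_i,\alpha_i)$, which lets you pull the common factor $(\alpha_i,\alpha_i)/(\alpha,\alpha)$ out of the dual formula, and (ii) the fact that $(\tau_\varepsilon\alpha,\tau_\varepsilon\alpha)=(\alpha,\alpha)$, which holds because on each individual root $\tau_\varepsilon$ acts either as the identity or as a product of some of the $s_i$, hence by an element of the Weyl group. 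A cleaner route that avoids the coordinate bookkeeping entirely: each $\sigma_i$ commutes with $\alpha\mapsto\alpha^\vee$ because dualization sends $-\alpha_j$ to $-\alpha_j^\vee$ and intertwines $s_i$ with $s_i^\vee$ (Weyl group elements commute with taking coroots), and $\tau_\pm$ is a composition of the $\sigma_i$. With either repair your argument is a valid proof of the cited statement.
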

\begin{exm}
Let $\Phi$ be a root system of type $A_2$. We assume that $\varepsilon(1)=1$ and $\varepsilon(2)=-1$. Its almost positive root system $\Phi_{\geq -1}$ is depicted as follows:
\begin{center}
\begin{tikzpicture}
                        \node[] () at (-2.35,0) {$-\alpha_1$};
                        \node[] () at (2.3,0) {$\alpha_1$};
					    \draw[<->,thick] (-2,0) -- (2,0);
					    \draw[<->,thick] (-1,1.73) -- (1,-1.73);
					    \draw[->,thick] (0,0) -- (1,1.73);
                        \node[] () at (-1.2,1.93) {$\alpha_2$};
                        \node[] () at (1.2,-1.93) {$-\alpha_2$};
                        \node[] () at (1.1,1.93) {$\alpha_1+\alpha_2$};
\end{tikzpicture}
\end{center}
Then $\tau_+=\sigma_1$ and $\tau_-=\sigma_2$ act on $\Phi_{\geq -1}$ in the following way:
\begin{center}
\begin{tikzpicture}
                        \node[] () at (0,0) {$\alpha_1+\alpha_2$};
                        \node[] () at (2.1,0) {$\alpha_2$};
                        \node[] () at (3.8,0) {$-\alpha_2$};
                        \node[] () at (-2.1,0) {$\alpha_1$};
                        \node[] () at (-3.8,0) {$-\alpha_1$};
					    \draw[<->,thick] (0.8,0) -- (1.8,0);
					    \draw[<->,thick] (-0.8,0) -- (-1.8,0);
					    \draw[<->,thick] (2.4,0) -- (3.4,0);
					    \draw[<->,thick] (-2.4,0) -- (-3.4,0);

                        \node[] () at (1.4,0.25) {$\sigma_1$};
                        \node[] () at (3,0.25) {$\sigma_2$};
                        \node[] () at (-1.2,0.25) {$\sigma_2$};
                        \node[] () at (-2.8,0.25) {$\sigma_1$};
                        \draw[->,thick] (-3.6,-0.2)..controls(-3.8,-0.6)..(-4,-0.2);
                        \draw[->,thick]                         (3.6,-0.2)..controls(3.8,-0.6)..(4,-0.2);
                        \node[] () at (3.9,-0.8) {$\sigma_1$};
                        \node[] () at (-3.7,-0.8) {$\sigma_2$};
\end{tikzpicture}
\end{center}
\end{exm}
We denote by $D_\tau$ the group generated by $\tau_+$ and $\tau_-$, it is a subgroup of the symmetric group of $\Phi_{\geq -1}$, and we call it the  $\tau$-\emph{transform group} of $\Phi_{\geq -1}$.

\begin{lem}{\em[Theorem 2.6\cite{FZ03a}]}\label{lem:structure of tau group}
\begin{enumerate}
\item Every $D$-orbit in $\Phi_{\geq -1}$ has a nonempty intersection with $-\Pi$. More specifically, the correspondence $\Omega\mapsto \Omega\cap (-\Pi)$ is a bijection between the $D$-orbits in $\Phi_{\geq -1}$ and the $<-\omega_0>$-orbits in $(-\Pi)$.
\item $D_\tau$ is a dihedral group of order $(h+2)$ or $2(h+2)$. The order of $\tau_-\tau_+$ is equal to $(h+2)/2$ if $w_0=-1$, and is equal to $(h+2)$ otherwise.
\end{enumerate}
\end{lem}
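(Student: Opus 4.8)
The plan is to analyze the action of $D_\tau$ on $\Phi_{\geq -1}$ by understanding first how the generators $\tau_+$ and $\tau_-$ behave on the negative simple roots $-\Pi$, and then leveraging the bipartite structure to lift the whole story to the root lattice, where the classical theory of Coxeter elements applies. For part (1), I would first observe that $\tau_-$ fixes every negative simple root $-\alpha_i$ with $\varepsilon(i)=+1$ and sends $-\alpha_i$ with $\varepsilon(i)=-1$ to a positive root (by formula \eqref{tau}, since $[\tau_-(-\alpha_i):\alpha_i]$ becomes $1$ plus nonnegative terms), and symmetrically for $\tau_+$. The key mechanism is that once one applies $\tau_\mp$ to a negative simple root, the result is a genuine positive root, and on positive roots $\tau_\pm$ coincides with the corresponding partial Coxeter transformation $\tau = s_+ s_-$ (a Coxeter element of $W$, where $s_\pm = \prod_{\varepsilon(i)=\pm 1} s_i$), until the orbit reaches another negative simple root. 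So each $D_\tau$-orbit, followed forward, travels through a string of positive roots governed by powers of a Coxeter element and terminates back in $-\Pi$; the two ``ends'' of such a string in $-\Pi$ are related by $-w_0$ (using that the Coxeter element has order $h$ and $w_0$ acts on $\Pi$ via the diagram automorphism $-w_0$). This gives the bijection $\Omega \mapsto \Omega \cap (-\Pi)$ with $\langle -w_0\rangle$-orbits on $-\Pi$.

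For part (2), I would show $\tau_+$ and $\tau_-$ are involutions (already recorded in Lemma~\ref{lem:tau group}(1)) that are distinct, so $D_\tau$ is dihedral, and the whole content is computing the order of $c := \tau_- \tau_+$. The idea is to pick a convenient orbit representative — for instance the orbit of a negative simple root $-\alpha_i$ — and track how many applications of $c$ (alternating $\tau_+$ then $\tau_-$) are needed to return. Concretely, starting from $-\alpha_i$ with $\varepsilon(i) = -1$, apply $\tau_+$ (fixes it), then $\tau_-$ (moves it into the positive roots), and thereafter each further $\tau_\pm$ acts as the Coxeter element $\tau = s_+ s_-$ on positive roots. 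The positive root string visited is $\tau^0, \tau^1, \dots$ applied to $s_-(-\alpha_i) = \alpha_i$ (roughly), and by the standard fact that $\{\tau^k(\alpha_i)\}$ runs over a full set of $h$ positive roots as $k$ ranges appropriately, one finds the orbit closes up after exactly $h+2$ steps of the alternating product, i.e.\ $h+2$ factors of $\mu_\pm$-type moves, hence $c$ has order $(h+2)/2$ when $w_0 = -1$ (the two ends of the string coincide) and order $h+2$ otherwise (the two ends differ by the nontrivial diagram automorphism $-w_0$, doubling the period). One must also check this is the maximal orbit length so that the order of $c$ is not smaller; this follows because the positive-root part of every orbit is a $\langle\tau\rangle$-suborbit and $\tau$ has order exactly $h$ on $E$.

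The main obstacle I anticipate is the bookkeeping at the ``boundary'' between negative simple roots and positive roots: verifying precisely that $\tau_\pm$ restricted to positive roots agrees with the linear Coxeter transformation $s_\pm$ (this needs the observation that for a positive root $\alpha$ all the coefficients $[\alpha:\alpha_j]$ are $\geq 0$, so $[\alpha:\alpha_j]_+ = [\alpha:\alpha_j]$ and the piecewise-linear formula \eqref{sigma i} becomes the honest linear reflection formula), and pinning down exactly when the orbit first hits $-\Pi$ again and which element of $-\Pi$ it is. The relation to $w_0$ — namely that $\tau^{h/2}$ or an appropriate power of $\tau$ realizes $w_0$ up to sign and that $-w_0$ permutes $\Pi$ — is the crucial input that produces the dichotomy in the order formula. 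Everything else reduces to the elementary facts about bipartite Coxeter elements $s_+ s_-$ in a finite Weyl group: that it has order $h$, acts without fixed points on the root space, and its powers applied to the simple roots sweep out all positive roots.
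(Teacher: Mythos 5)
The paper does not prove this lemma: it is imported verbatim from Fomin--Zelevinsky's $Y$-systems paper (Theorem~2.6 of \cite{FZ03a}), so there is no internal proof to compare your attempt against. That said, your sketch follows essentially the same route as the original argument in \cite{FZ03a}: establish that $\tau_\pm$ agrees with the linear transformation $s_\pm$ on positive roots (because $[\alpha:\alpha_j]_+=[\alpha:\alpha_j]$ there), observe that each orbit under the alternating action is a string of positive roots governed by the bipartite Coxeter element with both ends in $-\Pi$, and then invoke the Kostant--Steinberg theory of bipartite Coxeter elements (order $h$, no fixed vectors, $(s_-s_+)^{h/2}=w_0$ when $w_0=-1$, and the identification of the far end of the string with $-\alpha_{i^*}$) to get both the orbit--bijection of part (1) and the order computation of part (2). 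Your outline is correct, and the two obstacles you flag --- the boundary bookkeeping where the orbit enters and leaves $-\Pi$, and the precise power of the Coxeter element realizing $w_0$ --- are exactly the points where the full proof in \cite{FZ03a} does its real work; a complete write-up would need to quote or reprove those classical facts rather than gesture at them, and would also need to confirm (as you note) that the order of $\tau_-\tau_+$ is the common period of all orbits, not merely a multiple of one orbit's period. Since the present paper treats the lemma as a black box, reproducing that argument here is optional, but your reconstruction is faithful to it.
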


Then we have table \ref{table:tau group} on the $\tau$-transform groups.

\begin{table}[ht]
\begin{equation*}
\begin{array}{cccc}
\textrm{Dynkin type} & w_0=-1 & \textrm{Coxeter number} & \textrm{$\tau$-transform group $D_\tau$} \\
\hline
A_n & \textrm{No} & n+1 & D_{n+3} \\
 B_n & \textrm{Yes} & 2n &  D_{n+1}\\
 C_n & \textrm{Yes} & 2n & D_{n+1}\\
 D_n & \begin{cases}
\textrm{n odd} & \text{No} \\ 
\textrm{n even} & \text{Yes}
\end{cases}
& 2(n-1) & \begin{cases}
D_{2n} &\text{n odd} \\ 
D_{n} & \textrm{n even}
\end{cases} \\
E_6 & \textrm{No}  & 12 & D_{14}\\
 E_7 & \textrm{Yes} & 18 & D_{10}\\
E_8 & \textrm{Yes} & 30 & D_{16}\\
F_4  & \textrm{Yes} & 12 & D_{7}\\
G_2 & \textrm{Yes} &  6 & D_{4}\\
\end{array}
\end{equation*}
\smallskip
\caption{The $\tau$-transform group of finite type almost positive root system}
\label{table:tau group}
\end{table}

\begin{defn}{\em[Definition 10.2\cite{FZ07}]}\label{def:d-vectors}
For any $i\in [1,n]$, and $m$ such that $\varepsilon(i) = (-1)^m$, we define ${\bf{d}}(i;m)\in E$, by setting, for all $r\geqslant 0$:
\begin{align}
\label{eq:dems-m-positive-tau}
 {\bf{d}}(i;r) &=
\underbrace{\tau_- \tau_+ \cdots \tau_{\varepsilon(i)}}_{r
  \text{~factors}}(-\alpha_i) \qquad\, \text{for $\varepsilon(i) = (-1)^r$;}\\
\label{eq:dems-m-negative-tau}
 {\bf{d}}(j;-r-1) &=
\underbrace{\tau_+ \tau_- \cdots \tau_{\varepsilon(j)}}_{r
  \text{~factors}}(-\alpha_j) \qquad \text{for $\varepsilon(j) = (-1)^{r-1}$.}
\end{align}
\end{defn}
By \cite{FZ07} (Proposition 9.3),
\begin{equation}
\label{sqcup of d-vectors}
\Phi_{\geq -1}=\bigsqcup_{i\in[1,n]} \bigsqcup_{-h-1\leqslant r \leqslant h} \b(i;r)
\end{equation}
and
\begin{equation}
\label{eq:d-vector under longest element}
\begin{cases}
\b(i;-h-2)=-\alpha_{i^*} & \text{if $\varepsilon(i) = (-1)^h$;} \\ 
\b(j;h+1)=-\alpha_{j^*} & \text{if $\varepsilon(j) = (-1)^{h-1}$,}
\end{cases}
\end{equation}
where $i \mapsto i^*$ is the involution induced by the longest element $w_0 \!\in\! W$: $w_0(\alpha_i)\!=\! - \alpha_{i^*}$.
\begin{lem}{\em[Corollary 10.6 \cite{FZ07}]}\label{d-vectors on bipartite belt}
Each cluster variable $x_{i;r}$ on the bipartite belt \ref{def: bipartite belt} can be written as
\[
x_{i;r}=\frac{P_{i;r}(\x)}{\x^{{\bf{d}}(i;r)}}\,,
\]
where $P_{i;r}$ is a polynomial with non-zero constant term.
\end{lem}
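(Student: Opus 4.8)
The plan is to induct along the bipartite belt, converting the exchange relations into the piecewise-linear recursion that defines the vectors $\mathbf d(i;r)$. By the Laurent phenomenon of \cite{FZ02}, each $x_{i;r}$ is a Laurent polynomial in $\x=(x_1,\dots,x_n)$, hence has a unique presentation $x_{i;r}=P_{i;r}(\x)/\x^{\mathbf e(i;r)}$ with $\mathbf e(i;r)\in E$ and $P_{i;r}$ a polynomial divisible by no $x_j$; moreover, by \eqref{equ:cluster variables and roots} (that is, \cite{FZ03} Theorem 1.9) the $P_{i;r}$ have non-negative integer coefficients. It then suffices to prove (i) $\mathbf e(i;r)=\mathbf d(i;r)$ and (ii) $P_{i;r}(0)\neq0$. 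The base cases are those in which the relevant variable is still an initial one: $x_{i;0}=x_i=1/\x^{-\alpha_i}$ has numerator $1$ and $\mathbf d(i;0)=-\alpha_i$, and symmetrically at $r=-1$.

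For the inductive step, consider the belt move $\S_{r-1}\to\S_r$: it mutates exactly the variables $x_k$ with $\varepsilon(k)=(-1)^r$, these mutations pairwise commute, and the bipartite pattern makes one of the two products in each exchange relation empty, so that
\[
x_{k;r}\,x_{k;r-1}=\prod_{j:\,a_{jk}<0}x_{j;r-1}^{-a_{jk}}+1 ,
\]
where $x_{k;r-1}=x_{k;r-2}$ (bipartiteness keeps $x_k$ untouched by the preceding move) and every $x_{j;r-1}$ on the right lies in the opposite colour class, hence is a variable for which $\mathbf d(j;r-1)$ is already known. Plugging in the inductive Laurent forms, clearing denominators and using that the numerators are divisible by no $x_j$, one finds that $P_{k;r-2}$ divides $\prod_j P_{j;r-1}^{-a_{jk}}+\x^{M}$ with $M=\sum_j(-a_{jk})\,\mathbf d(j;r-1)$, and that after this cancellation the denominator vector of $x_{k;r}$ equals $M-\mathbf d(k;r-2)$; a direct comparison shows this coincides with $\mathbf d(k;r)$ of Definition \ref{def:d-vectors}, proving (i). For (ii), evaluate $P_{k;r-2}\,P_{k;r}=\prod_j P_{j;r-1}^{-a_{jk}}+\x^{M}$ at the origin: as $M\neq0$ the monomial $\x^{M}$ vanishes there, so $P_{k;r-2}(0)\,P_{k;r}(0)=\prod_j P_{j;r-1}(0)^{-a_{jk}}$, a positive integer by the inductive hypothesis and the non-negativity of the numerators; hence $P_{k;r}(0)\neq0$. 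The negative part of the belt is handled identically.

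The step I expect to be the real work is the comparison used for (i): one must check that tropicalizing the exchange relation above --- reading off the unique lowest monomial that survives the Laurent cancellation --- reproduces exactly the piecewise-linear recursion encoded in \eqref{sigma i}, so that the iteration from the base case really lands on the alternating $\tau$-word $\tau_-\tau_+\cdots\tau_{\varepsilon(i)}(-\alpha_i)$ of Definition \ref{def:d-vectors} and not merely on another element of its $D_\tau$-orbit. An alternative that sidesteps this bookkeeping is to invoke the separation-of-additions formula of \cite{FZ07}, which expresses each $x_{i;r}$ through its $g$-vector and $F$-polynomial: since $F$-polynomials have constant term $1$, property (ii) becomes automatic, and the $g$-vectors and $F$-polynomials on the bipartite belt computed in Section~10 of \cite{FZ07} pin down the denominator vectors to be $\mathbf d(i;r)$; this is in fact the route by which Corollary 10.6 is proved there. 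A minor point throughout is the convention at negative simple roots, where $\mathbf d(i;r)=-\alpha_i$, so $\x^{\mathbf d(i;r)}=x_i^{-1}$ and $P_{i;r}=1$.
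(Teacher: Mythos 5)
The paper does not prove this statement at all: Lemma \ref{d-vectors on bipartite belt} is imported verbatim as Corollary~10.6 of \cite{FZ07}, so there is no in-paper argument to compare yours against. Judged on its own terms, your sketch is a faithful reconstruction of how such a statement is established, and you correctly identify both viable routes: the direct induction along the belt, and the separation-of-additions route through $g$-vectors and $F$-polynomials computed in Section~10 of \cite{FZ07}, which is indeed how Fomin--Zelevinsky actually derive Corollary~10.6 (their Proposition~10.4 and Corollary~10.5 do the tropical bookkeeping you defer). Your argument for (ii) is clean --- and note you do not even need the non-negativity of coefficients you import from \eqref{equ:cluster variables and roots}: the identity $P_{k;r-2}(0)\,P_{k;r}(0)=\prod_j P_{j;r-1}(0)^{-a_{jk}}$ already propagates ``non-zero constant term'' inductively, since a product of non-zero integers is non-zero.

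Two caveats. First, the step you yourself flag --- checking that the exponent recursion $\mathbf d(k;r)=M-\mathbf d(k;r-2)$ with $M=\sum_{j}(-a_{jk})\mathbf d(j;r-1)$ reproduces the $\tau$-word of Definition \ref{def:d-vectors} via \eqref{tau} --- is the entire mathematical content of the lemma, and your sketch does not carry it out; as written this is an acknowledged gap rather than a proof. Second, your display $P_{k;r-2}\,P_{k;r}=\prod_j P_{j;r-1}^{-a_{jk}}+\x^{M}$ presumes $M\geq 0$ componentwise; at the first belt move one has $\mathbf d(j;r-1)=-\alpha_j$, hence $M\leq 0$, and the roles of numerator and the monomial $\x^M$ swap (one gets $P_{k;1}=\prod_j x_j^{-a_{jk}}+1$ directly). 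The same care is needed at the far end of the belt where \eqref{eq:d-vector under longest element} returns negative simple roots. These are convention issues rather than errors, but a complete proof must treat them separately from the generic step where all the $\mathbf d(j;r-1)$ are positive roots.
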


\subsection{Automorphism groups}
In this subsection, we recall the cluster automorphism group of a cluster algebra, and the automorphism group of the corresponding exchange graph. Firstly we define cluster automorphisms, which are introduced in \cite{ASS12} for cluster algebras without coefficients, and in \cite{CZ15} for cluster algebras with coefficients.
\begin{defn}\label{cluster automorphisms}\cite{ASS12,CZ15}(\emph{Cluster automorphisms})
For a cluster algebra $\A$ and a $\Z$-algebra automorphism $f:\A\to\A$, we call $f$ a \emph{cluster automorphism}, if there exists a labeled seed $(\ex,\fx,B)$ of $\A$ such that the following conditions are satisfied:
\begin{enumerate}
\item $f(\x)=f(\ex)\sqcup f(\fx)$ is a cluster, where $f(\ex)$ is the exchangeable part and $f(\fx)$ is the frozen part;
\item $f$ is compatible with mutations, that is, for every $x\in \ex$ and $y\in \x$, we have
$$f(\mu_{x,\x}(y))=\mu_{f(x),f(\x)}(f(y)).$$
\end{enumerate}
\end{defn}
Then a cluster automorphism maps a labeled seed $\S=(\ex,\fx,B)$ to a labeled seed $\S'=(\ex',\fx',B')$. Note that in a labeled seed, the cluster is an ordered set, then the second item in above definition yields that $B'= B$ or $B'=-B$. In fact, under our assumption that both $B$ and $B^{ex}$ are indecomposable, we have the following
\begin{lem}\cite{ASS12,CZ15}\label{lem:equivalent discription of clus-auto}
A $\Z$-algebra automorphism $f:\A\to\A$ is a cluster automorphism if and only if one of the following conditions is satisfied:
\begin{enumerate}
\item there exists a labeled seed $\S=(\ex,\fx,B)$ of $\A$, such that $f(\x)$ is the cluster in a labeled seed $\S'=(\ex',\fx',B')$ of $\A$ with $B'= B$ or $B'=-B$;
\item for every labeled seed $\S=(\ex,\fx,B)$ of $\A$, $f(\x)$ is the cluster in a labeled seed $\S'=(\ex',\fx',B')$ with $B'= B$ or $B'=-B$.
\end{enumerate}
\end{lem}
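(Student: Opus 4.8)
The plan is to establish the three conditions equivalent via the cycle: the defining property of Definition~\ref{cluster automorphisms} implies~(1); (1) implies~(2); and (2) implies the defining property (noting that (2) visibly implies~(1), so the cycle suffices). Throughout I would use that $f$, being an injective $\Z$-algebra homomorphism, carries the algebraically independent set $\x$ to an algebraically independent $m$-subset of $\F$, and that the exchange graph $E_\A$ (Definition~\ref{def:exchange-graph}) is connected, so every labeled seed of $\A$ is obtained from a fixed one by finitely many mutations.

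For the first implication I would start from the seed $\S=(\ex,\fx,B)$ furnished by the definition, together with the labeled seed $\S'=(\ex',\fx',B')$ whose cluster is $f(\x)=f(\ex)\sqcup f(\fx)$, ordered so that its cluster $m$-tuple is $(f(x_1),\dots,f(x_m))$ with $\ex'=(f(x_1),\dots,f(x_n))$. For each direction $k\in[1,n]$, applying $f$ to the exchange relation of $\S$ (Definition~\ref{def: mutation}) and using compatibility with mutations identifies $f(x_k)\cdot f(\mu_{x_k,\x}(x_k))$ with the exchange relation of $\S'$ in direction $k$; equating the two, and using algebraic independence of the $f(x_j)$ so that one is comparing polynomials with two monomial terms in $\Z[f(x_1),\dots,f(x_m)]$, forces the two exchange monomials to match (in some order). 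Reading off exponents gives $b'_{jk}=b_{jk}$ for every $j$ or $b'_{jk}=-b_{jk}$ for every $j$, so each column of $B'$ is $\pm$ the corresponding column of $B$. Finally, since $B^{ex}$ is indecomposable and the principal parts $B^{ex},(B')^{ex}$ are skew-symmetrizable, comparing the two entries in each pair of adjacent columns, via skew-symmetrizability, shows the column sign is constant, so $B'=B$ or $B'=-B$; this is~(1).

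Next, for (1)~$\Rightarrow$~(2), I would record two elementary facts: $\mu_k(-B)=-\mu_k(B)$ for every exchange matrix (immediate from the mutation rule of Definition~\ref{def: mutation}), and the fact that if a labeled seed has cluster $m$-tuple $(f(x_1),\dots,f(x_m))$ and exchange matrix $B$ or $-B$, then its mutation in direction $k$ has new cluster variable exactly $f(\mu_{x_k,\x}(x_k))$, since passing from $B$ to $-B$ only interchanges the two monomials of an exchange relation. Starting from the seed $\S$ of~(1), with $f(\x)$ the cluster of $\S'$ and $B'=\varepsilon B$ for a fixed sign $\varepsilon$, I would take an arbitrary labeled seed $\widetilde{\S}$, write it as $\mu_{k_\ell}\cdots\mu_{k_1}(\S)$, and apply the two facts inductively along this path to conclude that $f(\widetilde{\x})$ is the cluster of the labeled seed $\mu_{k_\ell}\cdots\mu_{k_1}(\S')$, whose exchange matrix is $\varepsilon\widetilde{B}$; this is~(2).

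Finally, for (2)~$\Rightarrow$~the defining property, I would fix any labeled seed $\S=(\ex,\fx,B)$; by~(2) the image $f(\x)=f(\ex)\sqcup f(\fx)$ is the cluster of a labeled seed with exchange matrix $\pm B$, which is condition~(1) of Definition~\ref{cluster automorphisms}. For the mutation-compatibility condition, take $x=x_k\in\ex$ and $y\in\x$: if $y\neq x$ then $\mu_{x,\x}(y)=y$ and, by injectivity of $f$, $\mu_{f(x),f(\x)}(f(y))=f(y)$ as well; if $y=x$, the second fact above (applied to the image seed) yields $\mu_{f(x),f(\x)}(f(x))=f(\mu_{x,\x}(x))$. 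Hence $f$ is a cluster automorphism. I expect the only genuinely nontrivial step to be the first implication: one must use unique factorization / algebraic independence to exclude every matching of the image exchange monomials other than the one prescribed by $\S'$, and then promote the per-column sign to a single global sign, which is exactly where indecomposability of $B^{ex}$ is needed; the other two implications are propagation along the connected exchange graph together with the identity $\mu_k(-B)=-\mu_k(B)$.
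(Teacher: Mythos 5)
The paper does not prove this lemma at all: it is quoted from \cite{ASS12,CZ15}, so there is no in-paper argument to compare against. Your cycle (definition $\Rightarrow$ (1) $\Rightarrow$ (2) $\Rightarrow$ definition) is correct and is essentially the standard proof from those references: comparing the two-term exchange binomials of $\S$ and $\S'$ via algebraic independence to get a per-column sign, promoting it to a global sign by indecomposability and skew-symmetrizability of $B^{ex}$, and then propagating along the connected exchange pattern using $\mu_k(-B)=-\mu_k(B)$ together with the observation that negating $B$ only swaps the two exchange monomials. No gaps.
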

We call this cluster automorphism such that $B= B'$ ($B= -B'$ respectively) the \emph{direct cluster automorphism} (\emph{inverse cluster automorphism} respectively). Clearly, all the cluster automorphisms of a cluster algebra $\A$ compose a group with homomorphism compositions as multiplications.
We call this group the \emph{cluster automorphism group} of $\A$, and denote it by $Aut(\A)$. We call the group $Aut^{+}(\A)$ consisting of the direct cluster automorphisms of $\A$ the \emph{direct cluster automorphism group} of $\A$, which is a subgroup of $Aut(\A)$ of index at most two\cite{ASS12,CZ15}.

\begin{defn}(Automorphism of exchange graphs)\cite{CZ15}
An automorphism of the exchange graph $E_\A$ of a cluster algebra $\A$ is an automorphism of $E_\A$ as a graph, that is, a permutation $\sigma$ of the vertex set, such that the pair of
vertices $(u,v)$ forms an edge if and only if the pair $(\sigma(u),\sigma(v))$ also forms an edge.
\end{defn}

It is clear that the natural composition of two automorphisms of $E_\A$ is again an automorphism of $E_\A$. We define an \emph{automorphism group
$Aut(E_\A)$} of $E_\A$ as a group consisting of automorphisms of $E_\A$ with compositions of automorphisms as multiplications.\\

\begin{lem}\label{lem:relations between two groups}
Let $\A$ be a finite type cluster algebra with a seed $(\ex,\fx,B)$, and $\A^{ex}$ is the principal part of $\A$ with a seed $(\ex,B^{ex})$. Let $S'$ be the specialization from $\A$ to $\A^{ex}$. Assume that $\A$ is gluing free, then
\begin{enumerate}
\item for a cluster automorphism $f$ of $\A$, the map $S'\circ f|_{\ex}: x\to S'(f(x))$ induces a cluster automorphism of $\A^{ex}$, and thus $Aut(\A)\subseteq Aut(\A^{ex})$;
\item a cluster automorphism of $\A$ maps clusters to clusters, and induces an automorphism of exchange graph $E_\A$, moreover, $Aut(\A)\subseteq Aut(E_\A)$.
\end{enumerate}
\end{lem}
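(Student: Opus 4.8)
The plan is to treat both assertions through one principle recorded in Lemma \ref{lem:equivalent discription of clus-auto}: a cluster automorphism carries every labeled seed to a labeled seed with exchange matrix changed at most by a global sign, and it intertwines mutations. The specialization $S'$ of Definition-Proposition \ref{def: specialization} supplies the matching seed-wise bridge between $\A$ and $\A^{ex}$ (it intertwines mutations, sending the seed of $\A$ at a vertex $t$ to the seed of $\A^{ex}$ at $t$, and killing the frozen variables), and gluing-freeness is what keeps the resulting maps from collapsing.

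For (2): by Lemma \ref{lem:equivalent discription of clus-auto}, a cluster automorphism $f$ sends the labeled seed at each $t\in\T_n$ to a labeled seed of $\A$, so $f$ maps clusters to clusters and induces a well-defined map $\bar f$ on the vertex set of $E_\A$; since $f$ commutes with mutations, $\bar f$ carries edges to edges, and applying the same to $f^{-1}$ shows $\bar f\in Aut(E_\A)$. Thus $f\mapsto\bar f$ is a group homomorphism $Aut(\A)\to Aut(E_\A)$, which I would show injective as follows. If $\bar f=\id$, then $f$ fixes the initial seed $[\Sigma]$ and each of its neighbours $[\mu_k\Sigma]$; comparing underlying (unordered) clusters gives $f(\x)=\x$ and $f(\mu_k(\x))=\mu_k(\x)$ as sets, and since $\mu_k(\x)$ and $\x$ agree precisely off $x_k$ this forces $f(x_k)=x_k$ for every exchangeable variable, and ranging over all seeds $f$ fixes every non-frozen cluster variable. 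For the frozen variables, $f$ fixing $[\Sigma]$ amounts to a relabeling of $\ex\sqcup\fx$ carrying $B$ to $B$ (Definition \ref{def:seeds}); its restriction to the exchangeable rows and columns is trivial by the previous step, so it must carry each frozen row of $B$ to itself, and these rows are pairwise distinct since $\A$ is gluing free. Hence $f$ is trivial on $\fx$ as well, so $f=\id$ on $\X$, and therefore on $\A=\Z[\X]$.

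For (1): write $f(\Sigma)=(\ex',\fx',B')$, so that $B'=\pm B$, and define $g$ on $\Q(x_1,\dots,x_n)$ by $g(x_i)=S'(f(x_i))$. The elements $S'(f(x_1)),\dots,S'(f(x_n))$ are the exchangeable cluster variables of the seed $S'(f(\Sigma))$ of $\A^{ex}$, hence algebraically independent, so $g$ is a well-defined field automorphism. Next, $g\circ S'$ and $S'\circ f$ are ring homomorphisms $\A\to\Q(x_1,\dots,x_n)$ that agree on the initial cluster $\x$ (using that $f$ permutes the common frozen set $\fx$, so $S'$ kills $f(x_j)$ for $j>n$ exactly as it kills $x_j$); a routine induction along the mutation tree, cancelling a nonzero field element in each exchange relation, extends the agreement to every cluster variable, so $g\circ S'=S'\circ f$ on $\A$. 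Since $S'\colon\A\to\A^{ex}$ is onto (every cluster variable of $\A^{ex}$ is the $S'$-image of one of $\A$), this gives $g(\A^{ex})\subseteq\A^{ex}$, and the same with $f^{-1}$ gives $g^{-1}(\A^{ex})\subseteq\A^{ex}$, so $g$ restricts to a $\Z$-algebra automorphism of $\A^{ex}$. As $g$ carries the seed $(\ex,B^{ex})$ to $S'(f(\Sigma))$, whose exchange matrix is $\pm B^{ex}$, Lemma \ref{lem:equivalent discription of clus-auto} shows $g$ is a cluster automorphism of $\A^{ex}$; and $g\circ S'=S'\circ f$ together with surjectivity of $S'$ makes $f\mapsto g$ a group homomorphism $Aut(\A)\to Aut(\A^{ex})$.

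The one genuinely delicate point, and the real content of the inclusion ``$Aut(\A)\subseteq Aut(\A^{ex})$'', is injectivity of $f\mapsto g$. From $g=\id$ one gets $S'(f(x_i))=x_i$ for each initial $x_i$, and I would finish by showing that in a gluing free finite type cluster algebra $S'$ restricts to a \emph{bijection} between the cluster variables of $\A$ and those of $\A^{ex}$: by the root parametrization \eqref{equ:cluster variables and roots} and Lemma \ref{d-vectors on bipartite belt}, specialization preserves the denominator vector (the $d$-vector) of a cluster variable, and in finite type the $d$-vector with respect to the bipartite seed determines the cluster variable, so no two are identified. Granting this, $f$ fixes the initial exchangeable cluster pointwise, and a last appeal to gluing-freeness — a cluster automorphism is determined by its restriction to one exchangeable cluster, since seeds are determined by their clusters and distinct frozen rows forbid a nontrivial relabeling of $\fx$ — yields $f=\id$. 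I expect this ``$S'$ separates cluster variables'' step to be the main obstacle; everything else is formal manipulation with seeds, mutations, and the specialization.
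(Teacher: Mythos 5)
Your argument is correct and rests on exactly the three facts the paper itself isolates (clusters determine seeds, the exchange graph and its cluster variables are independent of the choice of coefficients, and gluing-freeness kills any symmetry coming from permuting $\fx$); the paper simply defers the details to Theorems 3.14 and 3.16 of [CZ15], whereas you write them out. The one place where your write-up is still a sketch --- that $S'$ is injective on cluster variables, which you propose to prove via denominator vectors --- is precisely the paper's key point (2): in finite type the specialization induces a bijection between the cluster variables of $\A$ and of $\A^{ex}$ because the exchange graph, together with the assignment of cluster variables to seed positions, does not depend on the coefficients ([FZ07, Theorem 4.6], [FZ03]); citing that is cleaner than the $d$-vector detour, which would additionally require knowing that distinct cluster variables of $\A$ (with coefficients) have distinct $d$-vectors. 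With that substitution your proof is complete and matches the intended one.
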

\begin{proof}
These two statements are proved for cluster algebra defined by extended skew-symmetric matrix in Theorem 3.16 and Theorem 3.14 in \cite{CZ15} respectively. The key points of those proofs are as follows:
\begin{enumerate}
\item Clusters of $\A$ determine the seeds. Therefore, the vertices of the exchange graph are (unlabeled) clusters of $\A$.
\item The exchange graph $E_\A$ is independent on the choice of coefficients, that is, for another cluster algebra $\A'$ with the same principal part $\A^{ex}$ as $\A$ has, there is a canonical isomorphism $E_\A\cong E_\A'$.
\item $\A$ is gluing free. Thus there are no symmetries of $\A$ induced by permutations of the frozen cluster variables of $\A$.
\end{enumerate}
Now, note that these first two properties are also true for skew-symmetrizable cluster algebras of finite type (see in \cite{FZ03}). Thus the proofs are similar to the proofs in \cite{CZ15}.
\end{proof}

\begin{rem}
Generally, even for a coefficient free cluster algebra $\A$, the group $Aut(\A)$ may be a proper subgroup of $Aut(E_{\A})$, for example, cluster algebras of types $B_2$, $C_2$, $G_2$ and $F_4$ (see Example 2 \cite{CZ15b}). However, it is proved that these two groups are isomorphic with each other, if $\mathcal{A}$ is of finite type, excepting types of rank two and type $F_4$ (Theorem 3.7 in \cite{CZ15b}), or $\mathcal{A}$ is of skew-symmetric finite mutation type (Theorem 3.8 in \cite{CZ15b}).
\end{rem}

In the end of this section, we list the cluster automorphism groups of simply laced cluster algebras of finite type in table \ref{table:simply laced auto group}. These groups are computed in \cite{ASS12} by using the cluster category.
\begin{table}[ht]
\begin{equation*}
\begin{array}{cc}
\textrm{Dynkin type} & \textrm{Cluster automorphism group $Aut(\A)$} \\
\hline
A_{n} & D_{n+3}\\
D_4 & D_4\times S_3 \\
D_n (n\geqslant 5) & D_n\times\Z_2\\
E_6 & D_{14}\\
E_7 & D_{10}\\
E_8 & D_{16}\\
\end{array}
\end{equation*}
\smallskip
\caption{Cluster automorphism groups of simply laced cluster algebras of finite type}
\label{table:simply laced auto group}
\end{table}

\section{Cluster automorphisms and piecewise-linear transformations}
\label{Sec root system}
Firstly, in this section, we consider relations between cluster automorphism groups of coefficients free cluster algebras of finite type and the corresponding $\tau$-transform groups. Then we compute cluster automorphism groups for non-simply laced coefficients free finite type cluster algebras.
\subsection{$\tau$-transform groups}\label{subsec:tau transform groups}

In this subsection we fix a coefficient free cluster algebra $\A$ of finite type.
As mentioned in Lemma \ref{lem:relations between two groups}, an (ordered) cluster of $\A$ determines the (labeled) seed. For an ordered cluster $\x$ of $\A$, we denote by $\S(\x)$ ($B(\x)$, $Q(\x)$ respectively) the labeled seed (exchange matrix, quiver respectively) determined by $\x$.
Now, let $\S=(\x,B)$ be an initial labeled seed of $\A$. We always assume that $\S$ is bipartite, and the Cartan part $A(B)$ of $B$ is a finite type Cartan matrix.
By $Q$ we denote the valued quiver of $B$. Since $\S$ is bipartite, the orientation of $Q$ is unique up to exchanging the sources and the sinks. Recall that $\Phi_{\geq -1}$ is the almost positive root system of $A(B)$, $D_\tau$ is the $\tau$-transform group of $\Phi_{\geq -1}$. Then under correspondence (\ref{equ:cluster variables and roots}), the cluster $\x$ is indexed by negative simple roots in $\Phi_{\geq -1}$.

\begin{thm}\label{thm:tau induce auto}
The transform $\tau_{\pm}$ induces an inverse cluster automorphism of $\A$. Specifically speaking, $f_{\pm}:x_\alpha \mapsto x_{\tau_{\pm}(\alpha)}$ induces an algebra homomorphism of $\A$, such that $f(\x)$ is a cluster of $\A$ with $B(f(\x))=-B$.
\end{thm}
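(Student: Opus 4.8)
The plan is to show that the rule $f_{\pm}:x_\alpha\mapsto x_{\tau_{\pm}(\alpha)}$ extends to a well-defined $\Z$-algebra automorphism of $\A$ and then to identify the image of the initial seed. Since $\tau_{\pm}$ is an involution on $\Phi_{\geq -1}$ (Lemma \ref{lem:tau group}) and the correspondence $\alpha\mapsto x_\alpha$ of (\ref{equ:cluster variables and roots}) is a bijection between $\Phi_{\geq -1}$ and $\X$, the assignment $f_{\pm}$ is an involutive bijection of $\X$; hence it extends uniquely to a $\Z$-algebra involution of the polynomial ring $\Z[\X]$, and the only issue is whether this involution restricts to an automorphism of $\A$ itself, i.e.\ whether it sends clusters to clusters and is compatible with mutation. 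By Lemma \ref{lem:equivalent discription of clus-auto}, it suffices to exhibit \emph{one} labeled seed $\S=(\x,B)$ whose image $f_{\pm}(\x)$ is again the cluster of a labeled seed $\S'=(\x',B')$ with $B'=\pm B$; the natural candidate is the bipartite initial seed we have fixed, and we will aim for $B'=-B$, which also shows $f_{\pm}$ is an \emph{inverse} cluster automorphism.

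The key computation is to track the images of the negative simple roots. The initial cluster $\x$ corresponds to $-\Pi=\{-\alpha_1,\dots,-\alpha_n\}$. From (\ref{tau}), one reads off directly that $\tau_{\varepsilon}(-\alpha_i)=-\alpha_i$ when $\varepsilon(i)\neq\varepsilon$, while $\tau_{\varepsilon}(-\alpha_i)=\alpha_i$ when $\varepsilon(i)=\varepsilon$ (since all the $[\,-\alpha_i:\alpha_j\,]_+$ vanish). Thus $\tau_{\varepsilon}(-\Pi)$ is obtained from $-\Pi$ by replacing each $-\alpha_i$ with $\varepsilon(i)=\varepsilon$ by $+\alpha_i$. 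Under the root-to-variable dictionary, the $\mathbf d$-vector bookkeeping of Lemma \ref{d-vectors on bipartite belt} and Definition \ref{def:d-vectors} identifies these as precisely the cluster variables one obtains by applying $\mu_+$ (resp.\ $\mu_-$) to $\S$; more precisely, the seed $\Sigma_1=\mu_{\pm}(\S)=(\x_1,-B)$ of the bipartite belt (Definition \ref{def: bipartite belt}) has cluster $\x_1$ consisting exactly of these same roots, by comparing $\mathbf d(i;1)=\tau_{\varepsilon(i)}(-\alpha_i)$ with (\ref{equ:cluster variables and roots}). Hence $f_{\pm}(\x)=\x_1$ is the cluster of the labeled seed $(\x_1,-B)$, giving $B(f_{\pm}(\x))=-B$.

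Finally, to promote "sends one cluster to a cluster with exchange matrix $-B$" to a genuine cluster automorphism, I would invoke Lemma \ref{lem:equivalent discription of clus-auto}: condition (1) is now verified, so $f_{\pm}$ is automatically a cluster automorphism, necessarily inverse since $B'=-B$ and $B^{ex}$ is indecomposable. The main obstacle, and the place requiring genuine care rather than formal manipulation, is the middle step: matching $\tau_{\pm}$ acting on \emph{all} of $\Phi_{\geq -1}$ with the cluster structure, i.e.\ verifying that $f_{\pm}$ really is the \emph{algebra} endomorphism of $\F=\Q(x_1,\dots,x_n)$ induced by the seed mutation $\mu_{\pm}$ and not merely a set bijection of $\X$ that happens to agree on the initial cluster. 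Concretely, one must check that substituting $x_\alpha\mapsto x_{\tau_{\pm}(\alpha)}$ is compatible with the exchange relations, equivalently that the piecewise-linear identities (\ref{tau}) for $\tau_{\pm}$ mirror the $\mathbf d$-vector recursions governing mutation along $\mu_{\pm}$; this is where the bipartite-belt description (\ref{eq:dems-m-positive-tau})--(\ref{sqcup of d-vectors}) and Lemma \ref{d-vectors on bipartite belt} do the real work, reducing the claim to the already-known combinatorics of \cite{FZ03a,FZ07}.
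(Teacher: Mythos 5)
Your overall strategy mirrors the paper's: compute $\tau_{\pm}$ on $-\Pi$, identify $f_{\pm}(\x)$ with the cluster of $\mu_{\pm}(\S)$ so that $B(f_{\pm}(\x))=-B$, invoke Lemma \ref{lem:equivalent discription of clus-auto}, and then match the action of $\tau_{\pm}$ on all of $\Phi_{\geq -1}$ with the cluster structure via the bipartite belt and $\mathbf{d}$-vectors. However, as written the argument has a gap at its starting point, and the step you yourself single out as ``the main obstacle'' is precisely the content of the theorem and is never actually carried out.

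First, the opening move is not valid: $\A=\Z[\X]$ is the subalgebra of $\F$ \emph{generated} by $\X$, not a free polynomial ring on $\X$ --- the cluster variables satisfy many relations (all the exchange relations), so a bijection of $\X$ does not ``extend uniquely to a $\Z$-algebra involution.'' Moreover, Lemma \ref{lem:equivalent discription of clus-auto} takes as hypothesis that $f$ is already a $\Z$-algebra automorphism, so it cannot be used to manufacture one. The paper avoids this by defining $f_{-}$ only on the initial cluster, via formula (\ref{defn:f-}) (identity on the sources, $\mu_-$ on the sinks); since $\x$ is a free generating set of $\F$, this genuinely extends to an automorphism preserving $\A$, and only then is the lemma applied. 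Second, the substantive claim is the identity $f_{-}(x_\alpha)=x_{\tau_-(\alpha)}$ for \emph{every} $\alpha\in\Phi_{\geq-1}$, not merely for $\alpha\in-\Pi$; your proposal defers this to ``already-known combinatorics,'' but it is exactly where the proof lives. One must use (\ref{sqcup of d-vectors}) to see that every cluster variable lies on the bipartite belt, show that the automorphism of $E_\A$ induced by $f_-$ restricts to the reflection $\S_{2r}\leftrightarrow\S_{-2r+1}$ of the belt, rewrite the resulting permutation of cluster variables as $x_{i;-2r}\leftrightarrow x_{i;2r}$ (sources) and $x_{i;-2r+1}\leftrightarrow x_{i;2r-1}$ (sinks), and match it against the permutation $\b(i;-2r)\leftrightarrow\b(i;2r)$, $\b(i;-2r+1)\leftrightarrow\b(i;2r-1)$ that Definition \ref{def:d-vectors} yields for $\tau_-$, using Lemma \ref{d-vectors on bipartite belt} to convert $\mathbf{d}$-vectors back into cluster variables. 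Until that comparison is made, you have only verified that $\tau_{\pm}$ and the mutation-defined automorphism agree on the initial cluster, which does not yet prove the theorem.
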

\begin{proof}
We only consider the case of $\tau_-$, similar proof can be made for $\tau_+$.
Firstly we notice that on the bipartite belt defined in \ref{def: bipartite belt}, the cluster variables satisfy the following relations:
\begin{align}
\label{eq:claster-variables on blet+}
x_{i;2r+1}=x_{i;2r} \qquad\, \text{for $\varepsilon(i) = 1,~r\in \Z$;}\\
\label{eq:claster-variables on blet-}
x_{j;2r-1}=x_{i;2r} \qquad\, \text{for $\varepsilon(j) = -1,~r\in \Z$.}
\end{align}
Thus $\x_{2r}=\{x_{i;2r},x_{j;2r-1}~|~\varepsilon(i) = 1, \varepsilon(j) = -1\}$, and $\Y=\bigcup_{r\in \Z}\x_{2r}$ is the set of all the cluster variables on the bipartite belt. We define a map $f_-$ on $\x=\{x_{1;0},\cdots,x_{n;0}\}$ by
\begin{equation}
\label{defn:f-}
f_-(x_{i;0})  =
\begin{cases}
x_{i;0} & \text{if $\varepsilon(i) = 1$;} \\ 
\mu_-(x_{i;0}) & \text{if $\varepsilon(i) = -1$.}
\end{cases}
\end{equation}
Then $f(\x)$ is the cluster of the labeled seed $\mu_-(\S)$, and $B(f_-(\x))= -B(\x)$. Thus from Lemma \ref{lem:equivalent discription of clus-auto}, $f_-$ induces an inverse cluster automorphism of $\A$, which we denote also by $f_-$. Now we show that $f_-$ is in fact induced by $\tau_-$, or equivalently, for any cluster variable $x_\alpha \in \A$, we have
\begin{equation}
\label{eq:f- and tau-}
f_-(x_\alpha)  = x_{\tau_-(\alpha).}
\end{equation}
Due to (\ref{sqcup of d-vectors}) and \ref{d-vectors on bipartite belt}, the set of all cluster variables lies in the bipartite belt, that is, $\Y$ contains all the cluster variables of $\A$. Therefore we only need to prove (\ref{eq:f- and tau-}) for each element in $\Y$. On the one hand, since $f_-$ is a cluster automorphism, it induces an automorphism of $E_\A$ from Lemma \ref{lem:relations between two groups}(2). Specially, by viewing the bipartite belt as a subgraph of $E_\A$, $f_-$ induces an automorphism of the bipartite belt. By comparing the cluster variables, it is easy to see that this automorphism is in fact a reflection of the bipartite belt, such that $f_-(\S_{2r})=\S_{-2r+1}$ and $f_-(\S_{-2r+1})=\S_{2r}$ for $r \geqslant 0$ (for example, see Example \ref{exm:A3 type exchange graph} for the case of type $A_3$, see Example \ref{exm:B3 C3 type exchange graph} for the cases of type $B_3$ and type $C_3$). Under this automorphism, $f_-$ induces a permutation of $\Y$ as follows:
\begin{equation}
\label{correp:f-1}
f_-:
\begin{cases}
x_{i;-2r+1} \leftrightarrow x_{i;2r} & \text{if $\varepsilon(i) = 1$}, r \geqslant 0; \\ 
x_{i;-2r+1} \leftrightarrow x_{i;2r} & \text{if $\varepsilon(i) = -1$}, r \geqslant 0.
\end{cases}
\end{equation}
By using equalities (\ref{eq:claster-variables on blet+}) and (\ref{eq:claster-variables on blet-}), the above permutation is equivalent to the following one:
\begin{equation}
\label{correp:f-2}
f_-:
\begin{cases}
x_{i;-2r} \leftrightarrow x_{i;2r} & \text{if $\varepsilon(i) = 1$}, r \geqslant 0; \\ 
x_{i;-2r+1} \leftrightarrow x_{i;2r-1} & \text{if $\varepsilon(i) = -1$}, r \geqslant 0.
\end{cases}
\end{equation}
On the other hand, by Definition \ref{def:d-vectors}, $\tau_-$ induces a permutation on $\Phi_{\geq -1}$:
\begin{equation}
\label{correp:tau-}
\tau_-:
\begin{cases}
\b(i;-2r) \leftrightarrow \b(i;2r) & \text{if $\varepsilon(i) = 1$}, r \geqslant 0; \\ 
\b(i;-2r+1) \leftrightarrow \b(i;2r-1) & \text{if $\varepsilon(i) = -1$}, r \geqslant 0.
\end{cases}
\end{equation}
Then by the equality
\[
x_{i;r}=\frac{P_{i;r}(\x)}{\x^{{\bf{d}}(i;r)}}
\]
given in Lemma \ref{d-vectors on bipartite belt}, we know that the permutation of $f_-$ on $\Y$ coincides with the permutation of $\tau_-$ on $\Phi_{\geq -1}$. Therefore we have equality (\ref{eq:f- and tau-}) for any cluster variables of $\A$. Thus $f_-$ is induced by $\tau_-$.
\end{proof}

\begin{cor}
\label{cor:tau group is subgp of auto group}
$D_{\tau}\subseteq Aut(\A)$.
\end{cor}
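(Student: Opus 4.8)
The plan is to deduce the corollary directly from Theorem \ref{thm:tau induce auto}, since $D_\tau$ is by definition the subgroup of the symmetric group of $\Phi_{\geq -1}$ generated by $\tau_+$ and $\tau_-$, and we already know each of these acts as a cluster automorphism of $\A$. First I would set up the identification: by correspondence (\ref{equ:cluster variables and roots}), a permutation $\pi$ of $\Phi_{\geq -1}$ induces a bijection $x_\alpha \mapsto x_{\pi(\alpha)}$ of the cluster variable set $\mathscr{X}$, and this extends uniquely to a $\Z$-algebra endomorphism of $\F = \Q(x_1,\dots,x_m)$ whenever the $x_{\pi(\alpha)}$ are algebraically independent appropriately — but more simply, for $\pi = \tau_+$ or $\tau_-$ Theorem \ref{thm:tau induce auto} already tells us the induced map $f_\pm$ is a genuine cluster automorphism of $\A$, in fact an inverse one with $B(f_\pm(\x)) = -B$. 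So the assignment $\tau_\pm \mapsto f_\pm$ is the relevant starting data.

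Next I would observe that the map sending a permutation $\pi$ of $\Phi_{\geq -1}$ lying in $D_\tau$ to the corresponding permutation of $\mathscr{X}$ (transported through the bijection of (\ref{equ:cluster variables and roots})) is a group homomorphism $\Psi$ from $D_\tau$ into $\operatorname{Sym}(\mathscr{X})$, and it is injective because the bijection $\Phi_{\geq -1} \leftrightarrow \mathscr{X}$ is one-to-one. The content to check is that the image $\Psi(D_\tau)$ lands inside $Aut(\A)$. Since $Aut(\A)$ is a group and is closed under composition and inverses, and since $D_\tau = \langle \tau_+, \tau_- \rangle$, it suffices to know $\Psi(\tau_+) = f_+ \in Aut(\A)$ and $\Psi(\tau_-) = f_- \in Aut(\A)$, together with the compatibility $\Psi(\tau_\varepsilon \tau_{\varepsilon'}) = f_\varepsilon \circ f_{\varepsilon'}$; the latter is automatic once we check $\Psi$ is a homomorphism into $\operatorname{Sym}(\mathscr{X})$, because composition of the induced variable-permutations corresponds to composition of the root-permutations under the bijection. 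Both generators are handled by Theorem \ref{thm:tau induce auto}. Hence every element of $D_\tau$, viewed as a permutation of $\mathscr{X}$, is a composition of the cluster automorphisms $f_+$ and $f_-$, hence is itself a cluster automorphism, giving $D_\tau \subseteq Aut(\A)$.

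I do not anticipate a serious obstacle here; the only point requiring a little care is to make precise the sense in which $D_\tau$, a priori a group of permutations of the root set $\Phi_{\geq -1}$, is regarded as a subgroup of $Aut(\A)$ — namely via the fixed bijection of (\ref{equ:cluster variables and roots}), which is exactly how $f_\pm$ were defined from $\tau_\pm$ in the theorem. One should also note that this bijection is canonical once the initial bipartite seed $\S = (\x, B)$ is fixed, which it is throughout this subsection, so the embedding $D_\tau \hookrightarrow Aut(\A)$ is well defined. With that understood, the corollary is immediate from Theorem \ref{thm:tau induce auto} and the fact that $Aut(\A)$ is a group.
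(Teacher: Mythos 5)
Your proposal is correct and follows essentially the same route as the paper: invoke Theorem \ref{thm:tau induce auto} to get $f_\pm\in Aut(\A)$ for the generators, then transport the group structure through the bijection of (\ref{equ:cluster variables and roots}) to see that $\langle f_-,f_+\rangle\cong D_\tau$ (the paper phrases this as the relations in $\langle f_-,f_+\rangle$ corresponding to those in $\langle\tau_-,\tau_+\rangle$, which is your injectivity of $\Psi$). No gaps.
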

\begin{proof}
It follows from Theorem \ref{thm:tau induce auto} that $\tau_\pm$ induces 
a cluster automorphism $f_\pm$ in $Aut(\A)$. To prove that $D_{\tau}$ is 
a subgroup of $Aut(\A)$, we show that $D_{\tau}\cong<f_-,f_+>$. Note that a cluster automorphism in $<f_-,f_+>$ is an identity if and only if its action is an identity, if and only if the corresponding element in $D_\tau$ is an identity. Thus we know that the relations in $<f_-,f_+>$ correspond to the relations
in $<\tau_-,\tau_+>$. Therefore $D_{\tau}\cong <f_-,f_+>\subseteq Aut(\A)$.
\end{proof}
We can see from Lemma \ref{lem:structure of tau group} (and table \ref{table:tau group}) that $D_\tau$ is a dihedral group with two generators, one is a reflection $\tau_-$ (or $\tau_+$), the other one is a rotation $\tau_-\tau_+$. Then the order of $\tau_-\tau_+$ determines $D_\tau$. In order to find the relations between $D_\tau$ and $Aut(\A)$, we consider the corresponding element $f_0:=f_-f_+$ in $Aut(\A)$. If $w_0=-1$, then $(\tau_-\tau_+)^{\frac{h+2}{2}}=1$, and thus $f_0=(f_-f_+)^{\frac{h+2}{2}}$ is an identity. If $w_0\neq -1$, then $(\tau_-\tau_+)^{h+2}=1$, while $(\tau_-\tau_+)^{\frac{h+2}{2}}\neq 1$ ($h\in 2\Z$) or $\tau_+(\tau_-\tau_+)^{\frac{h+1}{2}}\neq 1$ ($h\in 2\Z+1$). Thus $f_0=(f_-f_+)^{\frac{h+2}{2}}$ or $f_0=f_+(f_-f_+)^{\frac{h+1}{2}}$ is not an identity, and by Theorem \ref{thm:tau induce auto} and equality (\ref{eq:d-vector under longest element}), the images of initial cluster variables in $\x$ are given by the action of longest element $w_0$ in Weyl group:
\begin{equation}
\label{eq: action f0}
f_0(x_{-\alpha_i})=
\begin{cases}
x_{-\alpha_{i^*}} & \text{if $\varepsilon(i) = (-1)^h$;} \\ 
x_{-\alpha_{i^*}} & \text{if $\varepsilon(i) = (-1)^{h-1}$.}
\end{cases}
\end{equation}
Therefore $f_0$ maps the initial seed to itself, and induces an automorphism of the initial quiver. We now consider the specific action of $f_0$ on $\A$ of a given Dynkin type.\\

{\bf{Case I}}, type $A_{2n} (n\geqslant 1)$. The underlying graph of $Q$ is:
\begin{center}
\begin{tikzpicture}
                        \node[] () at (-2.5,0) {$1$};
                        \node[] () at (-1.5,0) {$2$};
                        \node[] () at (0,-0.05) {$\cdots$};
                        \node[] () at (1.8,0) {$2n-1$};
                        \node[] () at (3.3,0) {$2n$};
					    \draw[-,thick] (-2.3,0) -- (-1.7,0);
					    \draw[-,thick] (-1.3,0) -- (-0.7,0);
					    \draw[-,thick] (0.5,0) -- (1.1,0);
					    \draw[-,thick] (2.4,0) -- (3,0);
\end{tikzpicture}
\end{center}
This is the only Dynkin type such that $h\in 2\Z+1$. Here $w_0\neq -1$, and it induces a permutation of $-\Pi$:
\begin{equation*}
\begin{array}{ccc}
-\alpha_{1} & \leftrightarrow &-\alpha_{2n}\\
-\alpha_{2} & \leftrightarrow &-\alpha_{2n-1}\\
&\cdots&\\
-\alpha_{n} & \leftrightarrow &-\alpha_{n+1}
\end{array}
\end{equation*}
This permutation corresponds to a permutation of initial cluster variables given by $f_0=f_+(f_-f_+)^{\frac{h+1}{2}}$, which induces a cluster automorphism. Note that this cluster automorphism makes a reflection on $Q$, exchanging the sources and the sinks, thus it is an inverse cluster automorphism.\\

{\bf{Case II}}, type $A_{2n+1} (n\geqslant 1)$:
\begin{center}
\begin{tikzpicture}
                        \node[] () at (-2.5,0) {$1$};
                        \node[] () at (-1.5,0) {$2$};
                        \node[] () at (0,-0.05) {$\cdots$};
                        \node[] () at (1.5,0) {$2n$};
                        \node[] () at (3.3,0) {$2n+1$};
					    \draw[-,thick] (-2.3,0) -- (-1.7,0);
					    \draw[-,thick] (-1.3,0) -- (-0.7,0);
					    \draw[-,thick] (0.5,0) -- (1.1,0);
					    \draw[-,thick] (1.9,0) -- (2.5,0);
\end{tikzpicture}
\end{center}
In this case, $w_0\neq -1$, it induces a permutation on $-\Pi$ by:
\begin{equation*}
\begin{array}{ccc}
-\alpha_{1} & \leftrightarrow & -\alpha_{2n+1}\\
-\alpha_{2} & \leftrightarrow & -\alpha_{2n}\\
&\cdots &\\
-\alpha_{n+1} & \leftrightarrow & -\alpha_{n+1}
\end{array}
\end{equation*}
This permutation corresponds to a permutation of initial cluster variables given by $f_0=f_+(f_-f_+)^{\frac{h+1}{2}}$, which reflects $Q$ at the central point of $Q$. Notice that this reflection maintains the sources and the sinks of $Q$. Therefore $f_0$ is a direct cluster automorphism.\\

{\bf{Case III}}, type $D_{2n} (n\geqslant 2)$:
\begin{center}
\begin{tikzpicture}
                        \node[] () at (-2.5,0) {$1$};
                        \node[] () at (-1.5,0) {$2$};
                        \node[] () at (0,-0.05) {$\cdots$};
                        \node[] () at (1.8,0) {$2n-2$};
                        \node[] () at (3.5,0.7) {$2n-1$};
                        \node[] () at (3.5,-0.7) {$2n$};
					    \draw[-,thick] (-2.3,0) -- (-1.7,0);
					    \draw[-,thick] (-1.3,0) -- (-0.7,0);
					    \draw[-,thick] (0.5,0) -- (1.1,0);
					    \draw[-,thick] (2.5,0.1) -- (3,0.4);
					    \draw[-,thick] (2.5,-0.1) -- (3,-0.4);
\end{tikzpicture}
\end{center}
In this case, exchanging initial cluster variables $x_{-\alpha_{2n-1}}$ and $x_{-\alpha_{2n}}$ induces a direct cluster automorphism $f'$ of $A_Q$, where:
\begin{equation}
\label{eq: f'}
f'(x_{-\alpha_i})=
\begin{cases}
x_{-\alpha_{i-1}} & \text{if $i=2n$;} \\ 
x_{-\alpha_{i+1}} & \text{if $i=2n-1$;}\\ 
x_{-\alpha_{i}} & \text{otherwise.}
\end{cases}
\end{equation}
Notice that $w_0= -1$, thus by Lemma \ref{lem:tau group}, the orbits of $-\alpha_{2n-1}$ and $-\alpha_{2n}$ under the action of $D_{\tau}$ are different. Therefore $f'$ does not belong to $D_{\tau}$. Moreover, it is clear that $f'$ commutes with elements in $D_{\tau}$, thus
\begin{equation}
D_{\tau}\times <f'> \subseteq Aut(\A),
\end{equation}
particularly,
\begin{equation}
\begin{cases}
D_{\tau}\times S_3 \subseteq Aut(\A) & \text{if $n= 2$;} \\ 
D_{\tau}\times \Z_2 \subseteq Aut(\A) & \text{if $n\neq 2$.}
\end{cases}
\end{equation}\\

{\bf{Case IV}}, type $D_{2n+1} (n\geqslant 2)$:
\begin{center}
\begin{tikzpicture}
                        \node[] () at (-2.5,0) {$1$};
                        \node[] () at (-1.5,0) {$2$};
                        \node[] () at (0,-0.05) {$\cdots$};
                        \node[] () at (1.8,0) {$2n-1$};
                        \node[] () at (3.5,0.7) {$2n+1$};
                        \node[] () at (3.5,-0.7) {$2n$};
					    \draw[-,thick] (-2.3,0) -- (-1.7,0);
					    \draw[-,thick] (-1.3,0) -- (-0.7,0);
					    \draw[-,thick] (0.5,0) -- (1.1,0);
					    \draw[-,thick] (2.5,0.1) -- (3,0.4);
					    \draw[-,thick] (2.5,-0.1) -- (3,-0.4);
\end{tikzpicture}
\end{center}
The longest element $w_0\neq -1$ induces a permutation on $-\Pi$ by:
\begin{equation*}
\begin{array}{ccc}
-\alpha_{1}& \leftrightarrow&-\alpha_{1}\\
-\alpha_{2}&\leftrightarrow&-\alpha_{2}\\
&\cdots &\\
-\alpha_{2n-1}&\leftrightarrow&-\alpha_{2n-1}\\
-\alpha_{2n}&\leftrightarrow&-\alpha_{2n+1}
\end{array}
\end{equation*}
Then $f_0$ is a direct automorphism of $\A$, with exchanging cluster variables $x_{-\alpha_{2n}}$ and $x_{-\alpha_{2n+1}}$.\\

{\bf{Case V}}, type $E_6$:
\begin{center}
\begin{tikzpicture}
                        \node[] () at (-2,0) {$1$};
                        \node[] () at (-1,0) {$2$};
                        \node[] () at (0,0) {$3$};
                        \node[] () at (1,0) {$4$};
                        \node[] () at (2,0) {$5$};
                        \node[] () at (0,1) {$6$};
					    \draw[-,thick] (-1.8,0) -- (-1.2,0);
					    \draw[-,thick] (-0.8,0) -- (-0.2,0);
					    \draw[-,thick] (1.8,0) -- (1.2,0);
					    \draw[-,thick] (0.8,0) -- (0.2,0);
					    \draw[-,thick] (0,0.2) -- (0,0.8);
\end{tikzpicture}
\end{center}
In this case, $w_0\neq -1$ induces a permutation of $-\Pi$:
\begin{equation*}
\begin{array}{ccc}
-\alpha_{1}&\leftrightarrow&-\alpha_{5}\\
-\alpha_{2}&\leftrightarrow&-\alpha_{4}\\
-\alpha_{3}&\leftrightarrow&-\alpha_{3}\\
-\alpha_{6}&\leftrightarrow&-\alpha_{6}
\end{array}
\end{equation*}
The corresponding cluster automorphism $f_0$ is direct, and the corresponding automorphism of $Q$ is a reflection with respect to the center axis of $Q$.\\

{\bf{Case VI}}, types $E_7,E_8,B_n,C_n,F_4,G_2$: In these cases, $w_0= -1$, $f_0=1$, and the automorphism group of $Q$ is trivial.\\

Finally, from above discussions and by comparing table \ref{table:simply laced auto group} of cluster automorphism groups of simply laced finite type cluster algebras and table \ref{table:tau group} of $\tau$-transform groups, we have the following corollary. It is worth to note that for the even integer $2(2n+1)$, we have an isomorphism of dihedral groups: $D_{2(2n+1)}\cong D_{2n+1}\times \Z_2$.
\begin{cor}\label{cor:simply laced auto group and tau group}
\begin{enumerate}
\item For cluster algebras of types $A_n(n\geqslant 2),D_{2n+1}(n\geqslant 2),E_6,E_7,E_8$, we have
$$Aut(\A)\cong D_{\tau};$$
\item for cluster algebras of type $D_{2n}(n\neq 2)$, we have
$$Aut(\A)\cong D_{\tau}\times \Z_2;$$
\item for cluster algebra of type $D_{4}$, we have
$$Aut(\A)\cong D_{\tau}\times S_3.$$
\end{enumerate}
\end{cor}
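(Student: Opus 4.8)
The plan is to promote the inclusions established earlier in this section into equalities by comparing orders, using the explicit tables already at hand. By Corollary~\ref{cor:tau group is subgp of auto group} we have $D_\tau\subseteq Aut(\A)$ in every simply laced finite type, and in type $D_{2n}$ the analysis of Case~III above gives the sharper inclusion $D_\tau\times\langle f'\rangle\subseteq Aut(\A)$: for $D_{2n}$ with $n\neq 2$, $\langle f'\rangle\cong\Z_2$ is generated by the cluster automorphism swapping $x_{-\alpha_{2n-1}}$ and $x_{-\alpha_{2n}}$, while for $D_4$ the corresponding group of cluster automorphisms induced by diagram automorphisms of $Q$ is $S_3$, permuting the three legs. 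This product is internal and direct: the relevant $f'$ commutes with $D_\tau$, and since $w_0=-1$ in type $D_{2n}$, Lemma~\ref{lem:structure of tau group}(1) forces every $D_\tau$-orbit in $\Phi_{\geq -1}$ to meet $-\Pi$ in a single negative simple root, so every element of $D_\tau$ fixes each initial cluster variable $x_{-\alpha_i}$, whence $D_\tau\cap\langle f'\rangle=1$.

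Next I would read off the orders from Table~\ref{table:simply laced auto group} (the groups $Aut(\A)$, computed in \cite{ASS12}) and Table~\ref{table:tau group} (the groups $D_\tau$), with the convention $|D_k|=2k$. For types $A_n\,(n\geq 2),\,E_6,\,E_7,\,E_8$ the two tables list the \emph{same} dihedral group $D_{n+3},\,D_{14},\,D_{10},\,D_{16}$; as $D_\tau\subseteq Aut(\A)$ and these are finite groups of equal order, $D_\tau=Aut(\A)$, which is part~(1) in these cases. For type $D_{2n+1}\,(n\geq 2)$ one has $D_\tau=D_{2(2n+1)}$, of order $4(2n+1)$, and $Aut(\A)=D_{2n+1}\times\Z_2$, also of order $4(2n+1)$; by the isomorphism $D_{2(2n+1)}\cong D_{2n+1}\times\Z_2$ recorded before the statement (valid because $2n+1$ is odd), the inclusion $D_\tau\subseteq Aut(\A)$ is again an equality of finite groups of the same order, completing part~(1).

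For type $D_{2n}\,(n\neq 2)$ one has $|D_\tau|=|D_{2n}|=4n$ while $|Aut(\A)|=|D_{2n}\times\Z_2|=8n$; since the subgroup $D_\tau\times\Z_2\subseteq Aut(\A)$ already has order $8n$, it must be all of $Aut(\A)$, which is part~(2). Similarly, for type $D_4$ one has $|D_\tau|=|D_4|=8$ and $|Aut(\A)|=|D_4\times S_3|=48$, while $D_\tau\times S_3\subseteq Aut(\A)$ has order $48$, hence equals $Aut(\A)$, which is part~(3). I do not expect a serious obstacle: the argument is essentially bookkeeping, and the only points requiring care are keeping the dihedral-order convention consistent --- in particular the identity $D_{2(2n+1)}\cong D_{2n+1}\times\Z_2$ --- and checking that $D_\tau\times\langle f'\rangle$ is a genuine internal direct product; all the structural input about $Aut(\A)$ is imported from \cite{ASS12} via Table~\ref{table:simply laced auto group}.
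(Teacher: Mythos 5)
Your proposal is essentially the paper's own argument: the paper likewise deduces the corollary by comparing Table \ref{table:simply laced auto group} (the groups $Aut(\A)$ from \cite{ASS12}) with Table \ref{table:tau group} (the groups $D_\tau$), using the inclusion $D_\tau\subseteq Aut(\A)$ from Corollary \ref{cor:tau group is subgp of auto group}, the extra factor $D_\tau\times\langle f'\rangle\subseteq Aut(\A)$ from Case III in type $D_{2n}$, and the identification $D_{2(2n+1)}\cong D_{2n+1}\times\Z_2$ for the odd case of type $D$. Your order bookkeeping is correct in every case.

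One step is misjustified, though the conclusion it supports is fine. Lemma \ref{lem:structure of tau group}(1) does \emph{not} imply that ``every element of $D_\tau$ fixes each initial cluster variable $x_{-\alpha_i}$'' --- that is false, since for instance $\tau_{\varepsilon(i)}$ sends $-\alpha_i$ to $\alpha_i$. What the lemma gives when $w_0=-1$ is that the map $\Omega\mapsto\Omega\cap(-\Pi)$ is a bijection onto singletons, i.e.\ distinct negative simple roots lie in \emph{distinct} $D_\tau$-orbits. That weaker statement already yields $D_\tau\cap\langle f'\rangle=1$: a nontrivial element of $\langle f'\rangle$ sends some $x_{-\alpha_i}$ to $x_{-\alpha_j}$ with $i\neq j$, so if it lay in $D_\tau$ the roots $-\alpha_i$ and $-\alpha_j$ would share a $D_\tau$-orbit, a contradiction. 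This is exactly the argument the paper runs in Case III, so you should simply replace the false intermediate claim with this orbit-separation argument; nothing else in your proof needs to change.
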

We will use the same notation $D_{\tau}$ to denote the group $ <f_-,f_+>$. Then any element $\sigma$ in $D_{\tau}$ maps $\S$ to a bipartite seed of $\A$, with the exchange matrix isomorphic to $B$ or $-B$. We denote it by $\sigma(\S)$.

\begin{exm}\label{exm:A3 type exchange graph}
We consider the cluster algebra $\A$ of type $A_3$ with initial labeled seed $\S=(\{x_1,x_2,x_3\}, Q)$, where $Q$ is
$\xymatrix{1\ar[r]&2&3\ar[l]}.$ Then its exchange graph $E_{\A}$ is depicted in Figure \ref{automorphisms of exchange graph of type $A_3$}. Note that there are three quadrilaterals and six pentagons in $E_{\A}$. The seeds $\S, \S_1, \S_2, \S_3, \S_4, \S_5$ in the bipartite belt (\ref{eq:big-circle-seeds+}) are depicted in the figure.
By Corollary \ref{cor:simply laced auto group and tau group} and Lemma \ref{lem:relations between two groups}, $D_{\tau}\cong D_6\subseteq Aut(E_\A)$.
The cluster automorphism $f_-\in D_{\tau}$ is defined by:
\begin{equation*}
f_-: \begin{cases}
x_1 \mapsto x_1 \\ 
x_2 \mapsto \mu_2(x_2) \\ 
x_3 \mapsto x_3
\end{cases}
\end{equation*}
It maps $\S$ to $\S_1$, and induces a reflection with respect to the horizontal central axis of $E_\A$. The cluster automorphism $f_+\in D_{\tau}$ maps $\S$ to $\S_5$, and also induces an automorphism of $E_\A$. The cluster automorphism $f_0=f_+f_-$ is defined by:
\begin{equation*}
f_0: \begin{cases}
x_1 \mapsto x_3 \\ 
x_2 \mapsto x_2 \\ 
x_3 \mapsto x_1
\end{cases}
\end{equation*}
It maintains the bipartite seeds of $\A$, and induces a reflection with respect to the vertical central axis of $E_\A$. The group $Aut(\A)\cong D_6$ can be viewed as the symmetry group of the bipartite belt consisting of $\{\S, \S_1, \S_2, \S_3, \S_4, \S_5\}$.  In fact the automorphisms induced by $f_-$ and $f_+$ generate all the automorphism of $E_\A$, that is, we have $Aut(\A)\cong Aut(E_\A)$, see Example 3 in \cite{CZ15b}.

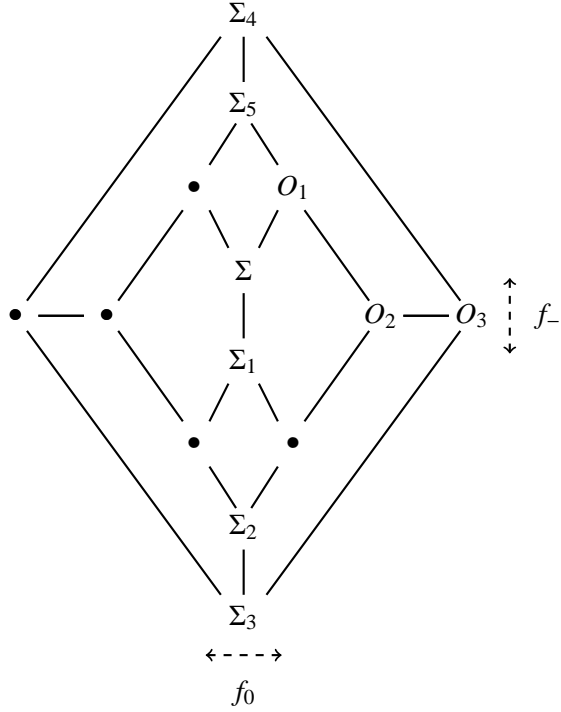
\begin{figure}
\begin{center}
{\begin{tikzpicture}

\node[] (C) at (0,4)
						{$\S_4$};
\node[] (C) at (0,2.8)
						{$\S_5$};
\node[] (C) at (0,0.6)
						{$\S$};
\node[] (C) at (3,0)
						{$O_3$};
\node[] (C) at (1.8,0)
						{$O_2$};
\node[] (C) at (0.65,1.7)
						{$O_1$};
\draw[thick] (0,3.7) -- (0,3.1);
\draw[thick] (2.1,0) -- (2.7,0);
\draw[thick] (0.3,3.7) -- (2.85,0.2);
\draw[thick] (0.1,2.55) -- (0.45,2.0);
\draw[thick] (0.2,0.9) -- (0.45,1.4);
\draw[thick] (1.65,0.2) -- (0.8,1.4);

\node[] (C) at (-3,0)
						{$\bullet$};
\node[] (C) at (-1.8,0)
						{$\bullet$};
\node[] (C) at (-0.65,1.7)
						{$\bullet$};
\draw[thick] (-2.1,0) -- (-2.7,0);
\draw[thick] (-0.3,3.7) -- (-2.85,0.2);
\draw[thick] (-0.1,2.55) -- (-0.45,2.0);
\draw[thick] (-0.2,0.9) -- (-0.45,1.4);
\draw[thick] (-1.65,0.2) -- (-0.8,1.4);


\node[] (C) at (0,-4)
						{$\S_3$};
\node[] (C) at (0,-2.8)
						{$\S_2$};
\node[] (C) at (0,-0.6)
						{$\S_1$};

\node[] (C) at (0.65,-1.7)
						{$\bullet$};
\draw[thick] (0,-3.7) -- (0,-3.1);
\draw[thick] (2.1,0) -- (2.7,0);
\draw[thick] (0.3,-3.7) -- (2.85,-0.2);
\draw[thick] (0.1,-2.55) -- (0.45,-2.0);
\draw[thick] (0.2,-0.9) -- (0.45,-1.4);
\draw[thick] (1.65,-0.2) -- (0.8,-1.4);

\node[] (C) at (-3,0)
						{$\bullet$};
\node[] (C) at (-1.8,0)
						{$\bullet$};
\node[] (C) at (-0.65,-1.7)
						{$\bullet$};
\node[] (C) at (0,-5)
						{$f_0$};
\node[] (C) at (4,0)
						{$f_-$};
\draw[thick] (-2.1,0) -- (-2.7,0);
\draw[thick] (-0.3,-3.7) -- (-2.85,-0.2);
\draw[thick] (-0.1,-2.55) -- (-0.45,-2.0);
\draw[thick] (-0.2,-0.9) -- (-0.45,-1.4);
\draw[thick] (-1.65,-0.2) -- (-0.8,-1.4);

\draw[thick] (0,-0.3) -- (0,0.3);
\draw[thick,dashed,<->] (-0.5,-4.5) -- (0.5,-4.5);
\draw[thick,dashed,<->] (3.5,0.5) -- (3.5,-0.5);
\end{tikzpicture}}
\end{center}
\begin{center}
\caption{The exchange graph of a cluster algebra of $A_3$ type}
\label{automorphisms of exchange graph of type $A_3$}
\end{center}
\end{figure}
\end{exm}

\subsection{Cluster automorphism groups: non-simply laced cases}\label{Cluster automorphism groups: non-simply laced cases}
In the this subsection, we will firstly recall the folding of a root system and the folding of a cluster algebra from \cite{FZ03a,D08}, and then use them to compute the cluster automorphism group of a non-simply laced finite type cluster algebra.\\

Let $\tilde{\S}=(\tilde{\x},\tilde{B})$ be a simply laced finite type seed, which $\t{\S}$ defines a cluster algebra $\t{\A}$. We assume that $\t{\S}$ is a bipartite seed with $A(\t{B})$ of finite type. Denote by $\tilde{\Pi}$ and $\tilde{\Phi}$ the set of simple roots and root system of $A(\t{B})$ respectively. The quiver of $\t{B}$ is $\tilde{Q}$, with a vertex set $\tilde{{\bf{I}}}$. We write $\t{{\bf{I}}}_+$ and $\t{{\bf{I}}}_-$ to the set of sources and the set of sinks of $\tilde{Q}$ respectively. Denote by $G(\tilde{Q})$ the group of (direct) automorphisms of $\tilde{Q}$, then every automorphism in $G(\tilde{Q})$ preserves the sets $\t{{\bf{I}}}_+$ and $\t{{\bf{I}}}_-$. We list these groups in table \ref{table:simply laced quiver auto group}:\\
\begin{table}[ht]
\begin{equation*}
\begin{array}{cc}
\textrm{Dynkin type}  & \textrm{Automorphism group $G(\tilde{Q})$}\\
\hline
A_{2n} (n\geqslant 1) & \{id\}\\
A_{2n+1} (n\geqslant 1) & \Z_2\\
D_4 & S_3\\
D_n (n\geqslant 5) & \Z_2\\
E_6 & \Z_2\\
E_7 & \{id\}\\
E_8 & \{id\}\\
\end{array}
\end{equation*}
\smallskip
\caption{Automorphism groups of bipartite quivers of simply laced finite type}
\label{table:simply laced quiver auto group}
\end{table}

Let ${\bf{I}}=\tilde{{\bf{I}}}/G(\tilde{Q})$ be the orbit set of $G(\t{Q})$ on $\t{{\bf{I}}}$. Denote by $\pi: \t{{\bf{I}}}\to {\bf{I}}$ the canonical projection, denote also by $\pi$ the projection of Laurent polynomial rings:
\begin{equation}\label{map:pi on polynomials}
\begin{array}{ccc}
\Z[x^{\pm}_{\t{i}}:\t{i}\in \t{{\bf{I}}}] & \rightarrow & \Z[x^{\pm}_{i}:i\in {\bf{I}}]\\
x^{\pm}_{\t{i}}& \rightarrow & x^{\pm}_{\pi(\t{i})}
\end{array}
\end{equation}
An element $g\in G(\t{Q})$ defines an automorphism on $\Z[x^{\pm}_{\t{i}}:\t{i}\in \t{{\bf{I}}}]$ by:
\begin{equation}\label{action on polynomials}
gx^{\pm}_{\t{i}}=x^{\pm}_{g\t{i}}
\end{equation}
for any $\t{i}\in \t{{\bf{I}}}$.
The quotient matrix $B=\pi(\t{B})=\t{B}/G(\t{Q})=(b_{ij})_{{\bf{I}}\times {\bf{I}}}$ is defined, for any $i,j \in {\bf{I}}$, by:

\begin{align}
\label{map:pi on matrix}
b_{ij}=\sum_{\t{k}\in i} a_{\t{k}\t{j}}~,
\end{align}
where $\t{j}$ is an element in $j$, since $G(\tilde{Q})$ is an automorphism group of $\t{Q}$, the definition is independent on the choice of $\t{j}$. Then $B$ is a symmetrizable Cartan matrix of finite type, and table \ref{table:simply laced to valued} shows the specific correspondences. We call above process a folding of a matrix\cite{D08}. For those simply laced Dynkin types which do not appear in the table, their automorphisms are trivial, thus the foldings are also trivial. Denote by $Q,\Phi,\Pi$ and $\S=(\x,B)$ the (valued) quiver, the root system, the set of simple roots, and the seed corresponding to $B$ respectively. The seed $\S$ defines a cluster algebra $\A$. By linearity, we extend the surjective map
${\t{\Pi}}^\vee\to \Pi^\vee: {\alpha_{\t{i}}}^\vee \mapsto {\alpha_{i}}^\vee$ to a surjection ${\t{\Phi}}^\vee\to \Phi^\vee$, and denote it by $\pi$. Then we have surjections $\pi: {\t{\Phi}}\to \Phi : (\pi(\t{\alpha}))^\vee\mapsto \pi({\t{\alpha}}^\vee)$ and $\pi: {\t{\Phi}}_{\geqslant 1}\to \Phi_{\geqslant 1}$. Moreover, $\pi$ and $\tau$ are commute: $\pi\circ {\tau}_{\pm}={\tau}_{\pm}\circ \pi$ (see in \cite{FZ03a}).\\

\begin{table}[ht]
\begin{equation*}
\begin{array}{cc}
\textrm{Dynkin type of $\t{B}$} & \textrm{Dynkin type of $B$} \\
\hline
A_{2n-1} (n\geqslant 2)& B_n \\
D_{n+1} (n\geqslant 4) & C_n\\
E_6 & F_4\\
D_4 & G_2\\
\end{array}
\end{equation*}
\smallskip
\caption{Folding correspondences of Dynkin types}
\label{table:simply laced to valued}
\end{table}

We say that a seed $\S'=(\x',B')$ of $\t{\A}$ is $G(\t{Q})$-invariant, if for any $\t{i}\in \t{{\bf{I}}}$ and $g\in G(\t{Q})$, we have $gx'_{\t{i}}=x'_{g\t{i}}$, where $gx'_{\t{i}}$ is defined in (\ref{action on polynomials}) (this definition is for quivers of Dynkin type, for the general case, we refer to \cite{D08}). For a $G(\t{Q})$-invariant seed $\S'=(\x',B')$, we define a seed $\pi(\S')=(\pi(\x'),\pi(B'))$, where $\pi(\x')$ and $\pi(B')$ are given by (\ref{map:pi on polynomials}) and (\ref{map:pi on matrix}) respectively. In particular, the initial seed $\t{\S}$ is $G(\t{Q})$-invariant, and $\pi(\t{\S})=\S$.

\begin{lem}\cite{D08}\label{lem:folding}
The map $\pi$ induces a bijection between the $G(\t{Q})$-invariant seeds of $\t{\A}$ and seeds of $\A$, particularly,
\begin{align}
\pi(\X(\t{\A}))=\X(\A);\\
\pi(\t{\A})=\A.~~~~~~~
\end{align}
\end{lem}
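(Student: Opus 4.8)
The plan is to realise every seed of $\A$ as the $\pi$-image of a $G(\t{Q})$-invariant seed of $\t{\A}$ by transporting mutations through $\pi$, the point being that a simultaneous mutation of $\t{\A}$ along a whole $G(\t{Q})$-orbit of vertices folds to a single mutation of $\A$.

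First I would isolate the combinatorial input. In each of the four cases of Table \ref{table:simply laced to valued} the vertices in one $G(\t{Q})$-orbit are pairwise non-adjacent in $\t{Q}$, and, since in finite type a cluster determines its seed, the same holds in the quiver of any $G(\t{Q})$-invariant seed $\t{\S}'=(\t{\x}',\t{B}')$. Hence for an orbit $[\t{k}]\subseteq\t{{\bf{I}}}$ the composite $\mu_{[\t{k}]}:=\prod_{\t{j}\in[\t{k}]}\mu_{\t{j}}$ is well defined (the factors commute), is an involution, and sends $G(\t{Q})$-invariant seeds to $G(\t{Q})$-invariant seeds, because any $g\in G(\t{Q})$ carries $\mu_{[\t{k}]}(\t{\S}')$ to $\mu_{g[\t{k}]}(g\cdot\t{\S}')=\mu_{[\t{k}]}(\t{\S}')$. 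Note also that $\t{{\bf{I}}}_+$ and $\t{{\bf{I}}}_-$ are unions of $G(\t{Q})$-orbits, so the bipartite mutations $\mu_\pm$ of $\t{\A}$ are themselves composites of orbit mutations; consequently the whole bipartite belt of $\t{\A}$ is built from $G(\t{Q})$-invariant seeds obtained from $\t{\S}$ by orbit mutations.

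The heart of the proof is the compatibility
\[
\pi\bigl(\mu_{[\t{k}]}(\t{\S}')\bigr)=\mu_k\bigl(\pi(\t{\S}')\bigr),\qquad k=\pi(\t{k}),
\]
for every $G(\t{Q})$-invariant seed $\t{\S}'$. For the matrices this is a direct check against the mutation rule, using $b_{ij}=\sum_{\t{l}\in i}a_{\t{l}\t{j}}$ together with the non-adjacency of the vertices of $[\t{k}]$, which guarantees that the $|[\t{k}]|$ successive mutations in direction $[\t{k}]$ do not interact. For the clusters, one first observes that $\pi\circ g=\pi$ as ring homomorphisms, so all the cluster variables of an invariant seed lying in one orbit have a common $\pi$-image; multiplying the exchange relations for the $\t{k}\in[\t{k}]$, whose monomials involve pairwise disjoint sets of neighbours, and applying $\pi$ then reproduces exactly the single exchange relation defining $\mu_k$ in $\A$. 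I expect this step, and in particular the bookkeeping that collapses the several exchange relations of $\t{\A}$ into one relation of $\A$, to be the main obstacle.

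With compatibility established the rest is formal. Since $\t{\S}$ is $G(\t{Q})$-invariant with $\pi(\t{\S})=\S$, iterating the compatibility along an arbitrary mutation sequence shows that $\pi$ carries the orbit-mutation class of $\t{\S}$ into the set of seeds of $\A$; as $E_\A$ is connected, every seed of $\A$ arises this way, which gives surjectivity and the inclusion $\pi(\X(\t{\A}))\supseteq\X(\A)$. Conversely, as noted above every cluster variable of $\t{\A}$ lies on its bipartite belt, hence in some invariant seed of the orbit-mutation class of $\t{\S}$, whose $\pi$-image is a seed of $\A$; this yields $\pi(\X(\t{\A}))\subseteq\X(\A)$, so $\pi(\X(\t{\A}))=\X(\A)$ and therefore $\pi(\t{\A})=\Z[\pi(\X(\t{\A}))]=\Z[\X(\A)]=\A$. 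Finally, for injectivity of $\pi$ on $G(\t{Q})$-invariant seeds I would combine $\pi\circ\tau_\pm=\tau_\pm\circ\pi$ with the $\b$-vector expansions on the bipartite belt (Lemma \ref{d-vectors on bipartite belt}) to get $\pi(x_{\t\alpha})=x_{\pi(\t\alpha)}$, and then use that the fibres of the surjection $\t{\Phi}_{\geq -1}\to\Phi_{\geq -1}$ are precisely the $G(\t{Q})$-orbits (a standard property of foldings, verifiable case by case): then $\pi^{-1}(x_\alpha)\cap\X(\t{\A})$ is a single $G(\t{Q})$-orbit, so an invariant cluster is determined by its $\pi$-image, which gives injectivity and, with surjectivity, the asserted bijection of seeds (and in particular that the orbit-mutation class of $\t{\S}$ is all of the $G(\t{Q})$-invariant seeds).
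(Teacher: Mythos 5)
The paper gives no proof of this lemma: it is quoted from \cite{D08}, and your overall strategy (orbit mutations $\mu_{[\t{k}]}$, compatibility $\pi\circ\mu_{[\t{k}]}=\mu_k\circ\pi$ on invariant seeds, the bipartite belt exhausting $\X(\t{\A})$, and fibres of $\t{\Phi}_{\geq -1}\to\Phi_{\geq -1}$ being $G(\t{Q})$-orbits) is exactly the route taken there. However, your treatment of the step you yourself flag as the main obstacle has a genuine gap. The compatibility of $\pi$ with mutation requires more than $G(\t{Q})$-invariance of $\t{\S}'=(\t{\x}',\t{B}')$: one needs \emph{sign-coherence within orbits}, i.e.\ for a fixed $\t{k}$ all entries $\t{b}'_{\t{j}\t{k}}$ with $\t{j}$ ranging over one orbit $j$ must have the same sign, so that $\pi$ of the exchange monomial $\prod_{\t{b}'_{\t{j}\t{k}}>0}x_{\t{j}}^{\t{b}'_{\t{j}\t{k}}}$ equals $\prod_{b_{jk}>0}x_j^{b_{jk}}$ with $b_{jk}=\sum_{\t{j}\in j}\t{b}'_{\t{j}\t{k}}$. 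Invariance alone only gives $\t{b}'_{g\t{j},g\t{k}}=\t{b}'_{\t{j}\t{k}}$, which does not constrain $\t{b}'_{\t{j}\t{k}}$ versus $\t{b}'_{g\t{j},\t{k}}$ when $g$ moves $\t{k}$; without coherence the folded matrix can even lose arrows by cancellation and the mutation rules no longer match. Establishing that this admissibility holds for $\t{\S}$ and is preserved by orbit mutations is the technical heart of \cite{D08}, and your proposal neither states it nor propagates it. Relatedly, your mechanism for the exchange relations is wrong as written: the neighbour sets of the vertices in an orbit are \emph{not} pairwise disjoint (in $D_4\to G_2$ all three outer vertices share the central neighbour), and a product of binomials is not a binomial, so ``multiplying the exchange relations'' yields the $|[\t{k}]|$-th power of the target relation at best; the correct move is to apply $\pi$ to a single exchange relation and use the sign-coherence above, together with $\pi(x'_{g\t{k}})=\pi(gx'_{\t{k}})=\pi(x'_{\t{k}})$.

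A second, smaller gap concerns the final bijection. Your argument produces a bijection between the \emph{orbit-mutation class} of $\t{\S}$ and the seeds of $\A$, and shows any invariant seed is determined by its $\pi$-image. To conclude that \emph{every} $G(\t{Q})$-invariant seed lies in this class (hence that the bijection is with all invariant seeds, as the lemma asserts), you still need to know that the $\pi$-image of an arbitrary invariant cluster is a cluster of $\A$ — otherwise there could be invariant seeds whose images are $|{\bf{I}}|$-element subsets of $\X(\A)$ that are not clusters. This requires an additional input, e.g.\ the compatibility-degree identity $(\pi(\t\alpha)\,\|\,\pi(\t\beta))=\max_{g}(\t\alpha\,\|\,g\t\beta)$ from the finite-type classification, or an appeal to Corollary \ref{cor:only one bipartite belt}-type exhaustion. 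Your parenthetical at the end asserts this as a consequence rather than supplying it. The non-adjacency of orbit vertices in an arbitrary invariant seed, which you derive from ``a cluster determines its seed,'' is in fact correct but for a different reason: invariance plus skew-symmetry give $\t{b}'_{\t{i}\t{j}}=\t{b}'_{g\t{i},g\t{j}}=\t{b}'_{\t{j}\t{i}}=-\t{b}'_{\t{i}\t{j}}$ whenever some $g\in G(\t{Q})$ transposes $\t{i}$ and $\t{j}$, which happens in all four cases of Table \ref{table:simply laced to valued}.
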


\begin{thm}\label{thm:tau gp eq to auto gp for nonsimply laced}
We use above notations, where $\A$ is a non-simply laced finite type cluster algebra and $\t{\A}$ is the corresponding simply laced cluster algebra.
\begin{enumerate}
\item An automorphism of $\t{Q}$ induces an automorphism of $\A_{\t{Q}}$, thus we view $G(\t{Q})$ as a subgroup of $Aut(\A_{\t{Q}})$. Then $G(\t{Q})$ is a normal subgroup of $Aut(\A_{\t{Q}})$.
\item $Aut(\A)\cong Aut(\t{\A})/G(\t{Q})$.
\item $Aut(\A)$ is isomorphic to its $\tau$-transform group: $Aut(\A)\cong D_{\tau}$.
\end{enumerate}
\end{thm}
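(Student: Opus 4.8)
The plan is to prove the three assertions of Theorem \ref{thm:tau gp eq to auto gp for nonsimply laced} in order, using the folding machinery of Lemma \ref{lem:folding} together with the identification $D_\tau \cong \langle f_-, f_+\rangle \subseteq \Aut(\A)$ from Corollary \ref{cor:tau group is subgp of auto group}. For part (1), I would first check that an element $g \in G(\tilde Q)$ does give a cluster automorphism of $\A_{\tilde Q}$: since $g$ is a direct automorphism of the bipartite quiver $\tilde Q$, it preserves $\tilde B$, hence $g(\tilde\x)$ is the cluster of a seed with the same exchange matrix; one then propagates $g$ along the exchange pattern using compatibility with mutations, so $g \in \Aut^+(\tilde\A)$. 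Normality is the key point here: given $h \in \Aut(\tilde\A)$ and $g \in G(\tilde Q)$, I want $hgh^{-1} \in G(\tilde Q)$. The idea is that $h$ sends the initial seed $\tilde\S$ to another bipartite seed $\tilde\S'$ whose quiver is $\tilde Q$ or its opposite, and that $G(\tilde Q)$ can be characterised intrinsically as the subgroup of $\Aut(\tilde\A)$ consisting of automorphisms fixing \emph{every} seed setwise (equivalently, acting trivially on the exchange graph $E_{\tilde\A}$ but possibly permuting the labels within each cluster) — a ``deck transformation''-type description. Since $h$ intertwines this property with itself, $hgh^{-1}$ again fixes every seed, hence lies in $G(\tilde Q)$. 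One must be slightly careful in the $D_4$ case where $G(\tilde Q) = S_3$ is non-abelian, but the intrinsic characterisation does not use abelianness.

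For part (2), the strategy is to construct a surjective group homomorphism $\Phi\colon \Aut(\tilde\A) \to \Aut(\A)$ with kernel exactly $G(\tilde Q)$. Given $h \in \Aut(\tilde\A)$, it permutes the seeds of $\tilde\A$; because $h$ normalises $G(\tilde Q)$ by part (1), it permutes the $G(\tilde Q)$-invariant seeds among themselves, hence by Lemma \ref{lem:folding} induces a permutation of the seeds of $\A$, which one checks respects mutations and therefore defines an element $\Phi(h) \in \Aut(\A)$ (using that a cluster automorphism of a finite type cluster algebra is determined by its action on seeds, via Lemma \ref{lem:relations between two groups} and the fact that $\Aut(\A) \hookrightarrow \Aut(E_\A)$). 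Surjectivity follows because every element of $\Aut(\A)$ can be realised as a symmetry of the exchange graph $E_\A \cong E_{\tilde\A}/G(\tilde Q)$ that lifts to a $G(\tilde Q)$-equivariant symmetry of $E_{\tilde\A}$; here one wants to lift the generating cluster automorphisms, and the cleanest route is to use part (3)'s building blocks — namely that $f_\pm$ on $\A$ is the projection under $\pi$ of $\tilde f_\pm$ on $\tilde\A$, which is immediate from $\pi \circ \tau_\pm = \tau_\pm \circ \pi$ and Lemma \ref{lem:folding}, so $D_\tau(\A)$ lies in the image of $\Phi$. The kernel is computed by the intrinsic description from part (1): $\Phi(h) = \id$ means $h$ fixes every $G(\tilde Q)$-invariant seed, and one argues this forces $h$ to fix \emph{all} seeds of $\tilde\A$ (a $G(\tilde Q)$-invariant seed and its mutations generate everything after applying $G(\tilde Q)$), so $h \in G(\tilde Q)$.

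For part (3), I would combine part (2) with the simply laced computations already in hand. By Corollary \ref{cor:simply laced auto group and tau group} (and Table \ref{table:simply laced auto group}), $\Aut(\tilde\A) \cong D_{\tilde\tau}$ except in the $D_{2n}$ cases; running through Table \ref{table:simply laced to valued} one sees the relevant $\tilde\A$ are of types $A_{2n-1}$, $D_{n+1}$, $E_6$, $D_4$, so one must handle $D_{n+1}$ (both parities) carefully. Using $\Aut(\tilde\A) \cong D_{\tilde\tau}$ or $D_{\tilde\tau} \times \Z_2$ (resp. $\times S_3$ for $\tilde D_4$), together with $G(\tilde Q)$ being, respectively, $\{id\}$, $\Z_2$, $\Z_2$, $S_3$ (Table \ref{table:simply laced quiver auto group}), one computes the quotient $\Aut(\tilde\A)/G(\tilde Q)$ case by case and checks it matches $D_\tau(\A)$ read off from Table \ref{table:tau group} (types $B_n, C_n, F_4, G_2$). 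The subtle cases are $C_n = D_{n+1}/\Z_2$: when $\tilde\A$ is $D_{2m}$ one has $\Aut(\tilde\A) = D_{\tilde\tau} \times \Z_2$ and the $\Z_2 = G(\tilde Q)$ being quotiented must be identified — I expect it to be exactly the exceptional factor $\langle f'\rangle$ from Case III (the swap of the two fork tips, which is precisely the nontrivial element of $G(D_{2m})$), so the quotient is $D_{\tilde\tau} = D_{2(2m-1)+1}$... and one reconciles this with $D_{m+1}$ via the folding of the $\tau$-groups; the bookkeeping of Coxeter numbers $h(D_{n+1}) = 2n$ versus $h(C_n) = 2n$ makes this work out since $D_\tau$ is computed from $h$ and $w_0$ and $\pi$ commutes with $\tau_\pm$. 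The main obstacle, and the step deserving the most care, is the kernel computation in part (2) — i.e. pinning down that $\ker\Phi$ is exactly $G(\tilde Q)$ and not larger — since this is where the special behaviour of $D_{2n}$ (extra automorphisms not from $D_\tau$) and of $D_4$ (extra $S_3$) has to be shown to be entirely ``absorbed'' by $G(\tilde Q)$ in the quotient, leaving a clean dihedral group; establishing the intrinsic ``fixes every seed'' characterisation of $G(\tilde Q)$ rigorously is the linchpin.
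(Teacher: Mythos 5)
Your proposal hinges on an ``intrinsic characterisation'' of $G(\t{Q})$ as the subgroup of $Aut(\t{\A})$ consisting of automorphisms fixing \emph{every} seed setwise, i.e.\ acting trivially on $E_{\t{\A}}$. This characterisation is false, and since you explicitly make it the linchpin of both the normality argument in (1) and the kernel computation in (2), the argument collapses at that point. A nontrivial $g\in G(\t{Q})$ does fix the initial seed $\t{\S}$ setwise, but it sends the adjacent seed $\mu_k(\t{\S})$ to $\mu_{g(k)}(\t{\S})$, so it moves seeds whenever $g(k)\neq k$; concretely, in type $A_3$ the generator of $G(\t{Q})\cong\Z_2$ is the automorphism $f_0$ of Example \ref{exm:A3 type exchange graph}, which acts on $E_{\t{\A}}$ as the reflection in the vertical axis of Figure \ref{automorphisms of exchange graph of type $A_3$} — certainly not trivially. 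The surjectivity step has a related problem: the identification $E_\A\cong E_{\t{\A}}/G(\t{Q})$ is not what Lemma \ref{lem:folding} gives. Folding identifies the seeds of $\A$ with the $G(\t{Q})$-\emph{invariant} seeds of $\t{\A}$ (a fixed-point subgraph), not with the $G(\t{Q})$-orbits of all seeds; e.g.\ $A_3$ has $14$ seeds and $B_2$ has $6\neq 14/2$.

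The paper avoids these issues by working with the groups it has already computed explicitly. Normality is obtained by identifying $G(\t{Q})$ inside $Aut(\t{\A})$ case by case: for $\t{Q}$ of types $A_{2n+1}$, $D_{2n+1}$, $E_6$ it is $\langle f_0\rangle\cong\Z_2$, which is normal in the dihedral group $D_\tau$; for types $D_{2n}$ and $D_4$ it is the $\Z_2$ (resp.\ $S_3$) direct factor of $Aut(\t{\A})\cong D_\tau\times\Z_2$ (resp.\ $D_\tau\times S_3$). Statement (2) is then proved via the observation that every seed of $\t{\A}$ whose quiver is isomorphic to $\t{Q}$ or $\t{Q}^{op}$ is automatically $G(\t{Q})$-invariant and projects under $\pi$ to a seed of $\A$ with quiver $Q$ or $Q^{op}$, and conversely; Lemma \ref{lem:equivalent discription of clus-auto} then transports cluster automorphisms in both directions, with kernel exactly $G(\t{Q})$. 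Your part (3) — comparing $Aut(\t{\A})/G(\t{Q})$ against Table \ref{table:tau group} type by type — is essentially the paper's argument and would be fine once (1) and (2) are repaired (your displayed order $D_{2(2m-1)+1}$ for the quotient in the $C_n$ case is a slip: with $h(D_{2m})=4m-2$ and $w_0=-1$ the group $D_{\t{\tau}}$ is $D_{2m}=D_{n+1}$, which is what must match $D_\tau(C_n)$).
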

\begin{proof}
\begin{enumerate}
\item It is clear that an automorphism of $\t{Q}$ induces an automorphism of $\A_{\t{Q}}$, now we show that $G(\t{Q})$ is normal. If $\t{Q}$ is of type $A_{2n+1} (n\geqslant 1)$, type $D_{2n+1} (n\geqslant 2)$ or type $E_6$, then $G(\t{Q})\cong\Z_2$. In fact from discussions in {\bf{Case II}}, {\bf{Case IV}} and {\bf{Case V}} respectively, the non-trivial automorphism of $\t{Q}$ is induced by $f_0$. Note that in these cases, $<f_0>\cong \Z_2$ is a normal subgroup of $D_\tau$. Thus $G(\t{Q})$ is a normal subgroup of $Aut(\A_{\t{Q}})\cong D_\tau$. If $\t{Q}$ is of type $D_{2n} (n\geqslant 2)$, then from {\bf{Case III}}, $G(\t{Q})\cong \Z_2\cong <f_0>$ is the second part in the direct product $Aut(\A_{\t{Q}})\cong D_\tau\times \Z_2$, thus $G(\t{Q})$ is a normal subgroup of $Aut(\A_{\t{Q}})$. Similarly, if $\t{Q}$ is of type $D_{4}$, $G(\t{Q})\cong S_3$ is a normal subgroup of $Aut(\A_{\t{Q}})\cong D_\tau\times S_3$.
    For the cases of $E_7$ and $E_8$, $G(\t{Q})$ is trivial. Thus we also have the conclusion.
\item
To prove the statement, we only need to notice the following two observations: On the one hand, every seed $\t{\S}'$ of $\t{\A}$, with quiver isomorphic to $\t{Q}$ or $\t{Q}^{op}$, is $G(\t{Q})$-invariant, and the quiver of $\pi({\t{\S}}')$ is isomorphic to $Q$ or $Q^{op}$ respectively. On the other hand, the preimage of a seed of $\A$, with quiver isomorphic to $Q$ or $Q^{op}$, is a seed of $\t{\A}$ with quiver isomorphic to $\t{Q}$ or $\t{Q}^{op}$ respectively. Then Lemma \ref{lem:equivalent discription of clus-auto} yields that a cluster automorphism of $\t{\A}$ induces a cluster automorphism of $\A$, and conversely any cluster automorphism of $\A$ is induced from a cluster automorphism of $\t{\A}$. Moreover, it is easy to see that a cluster automorphism of $\A$ is trivial if and only if it is induced from an element in $G(\t{Q})\subset Aut(\t{\A})$. Therefore we have $Aut(\A)\cong Aut(\t{\A})/G(\t{Q})$.
\item
By comparing the group $Aut(\t{\A})/G(\t{Q})$ and the $\tau$-transform group $D_\tau$ of $\A$ in table \ref{table:tau group}, this follows from the second statement.
\end{enumerate}
\end{proof}

\begin{figure}
\begin{center}
{\begin{tikzpicture}[scale=0.8]

\node[] (C) at (0,6)
						{$O_1$};

\node[] (C) at (1,0)
						{$O_4$};
\node[] (C) at (2,1.5)
						{$O_3$};
\node[] (C) at (1,3)
						{$\S$};
\node[] (C) at (3,3)
						{$\S_7$};
\node[] (C) at (2,4.5)
						{$O_2$};

\draw[thick] (-0.8,0) -- (0.8,0);
\draw[thick] (-0.8,3) -- (0.8,3);

\draw[thick] (1.2,0.2) -- (1.8,1.3);
\draw[thick] (1.2,2.8) -- (1.8,1.7);
\draw[thick] (1.2,3.2) -- (1.8,4.3);
\draw[thick] (2.8,3.2) -- (2.2,4.3);
\draw[thick] (1.8,4.7) -- (0.2,5.8);
\draw[thick] (2.2,1.7) -- (2.8,2.8);

\node[] (C) at (-1,0)
						{$\bullet$};
\node[] (C) at (-2,1.5)
						{$\bullet$};
\node[] (C) at (-1,3)
						{$\S_1$};
\node[] (C) at (-3,3)
						{$\S_2$};
\node[] (C) at (-2,4.5)
						{$\bullet$};

\draw[thick] (-1.2,0.2) -- (-1.8,1.3);
\draw[thick] (-1.2,2.8) -- (-1.8,1.7);
\draw[thick] (-1.2,3.2) -- (-1.8,4.3);
\draw[thick] (-2.8,3.2) -- (-2.2,4.3);
\draw[thick] (-1.8,4.7) -- (-0.2,5.8);
\draw[thick] (-2.2,1.7) -- (-2.8,2.8);
\node[] (C) at (0,-6)
						{$\bullet$};
\node[] (C) at (2,-1.5)
						{$\bullet$};
\node[] (C) at (1,-3)
						{$\S_5$};
\node[] (C) at (3,-3)
						{$\S_6$};
\node[] (C) at (2,-4.5)
						{$\bullet$};

\draw[thick] (-0.8,-3) -- (0.8,-3);

\draw[thick] (1.2,-0.2) -- (1.8,-1.3);
\draw[thick] (1.2,-2.8) -- (1.8,-1.7);
\draw[thick] (1.2,-3.2) -- (1.8,-4.3);
\draw[thick] (2.8,-3.2) -- (2.2,-4.3);
\draw[thick] (1.8,-4.7) -- (0.2,-5.8);
\draw[thick] (2.2,-1.7) -- (2.8,-2.8);

\node[] (C) at (-2,-1.5)
						{$\bullet$};
\node[] (C) at (-1,-3)
						{$\S_4$};
\node[] (C) at (-3,-3)
						{$\S_3$};
\node[] (C) at (-2,-4.5)
						{$\bullet$};

\draw[thick] (-1.2,-0.2) -- (-1.8,-1.3);
\draw[thick] (-1.2,-2.8) -- (-1.8,-1.7);
\draw[thick] (-1.2,-3.2) -- (-1.8,-4.3);
\draw[thick] (-2.8,-3.2) -- (-2.2,-4.3);
\draw[thick] (-1.8,-4.7) -- (-0.2,-5.8);
\draw[thick] (-2.2,-1.7) -- (-2.8,-2.8);
\draw[thick] (3,2.8) -- (3,-2.8);
\draw[thick] (-3,2.8) -- (-3,-2.8);

\draw[thick] (0.4,5.9) arc (70:-70:6.3);

\end{tikzpicture}}
\end{center}
\begin{center}
\caption{The exchange graph of a cluster algebra of type $B_3$ or type $C_3$}
\label{automorphisms of exchange graph of type $BC_3$}
\end{center}
\end{figure}
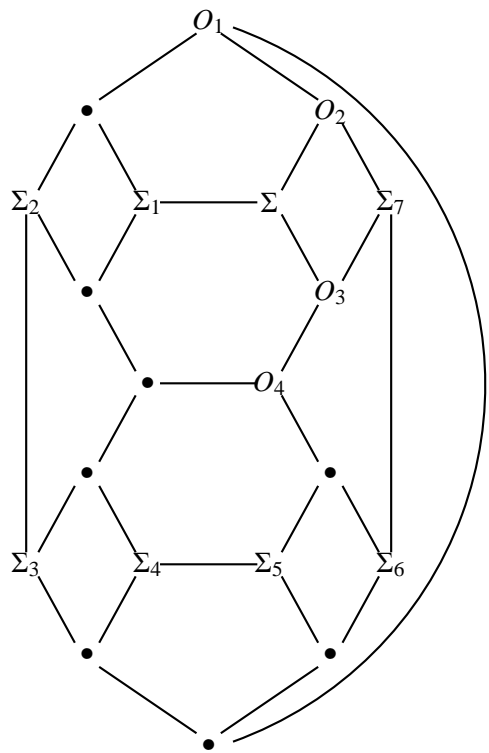
\begin{exm}\label{exm:B3 C3 type exchange graph}
It is well known that the cluster algebras of type $B_n$ and type $C_n$ have the
same combinatorial structure. Originating at a bipartite seed $\S$,
the exchange graph of a cluster algebra $\A$ of type $B_3$ or type $C_3$
is depicted in Figure \ref{automorphisms of exchange graph of type $BC_3$},
where $\S, \S_1, \cdot \cdot \cdot$ are seeds on the bipartite belt (\ref{eq:big-circle-seeds+}).
For the cluster algebra of type $B_3$, the quiver of the initial seed $\S$ is \begin{center}
{\begin{tikzpicture}
\node[] (C) at (-1.5,0)  {$1$};
\node[] (C) at (0,0)  {$2$};	
\node[] (C) at (1.5,0)  {$3.$};
\node[] (C) at (0.7,0.3)  {\qihao{(2,1)}};
\draw[<-,thick] (-0.2,0) -- (-1.3,0);
\draw[<-,thick] (0.2,0) -- (1.3,0);
\end{tikzpicture}}
\end{center}
For the cluster algebra of type $C_3$, the quiver of the initial seed $\S$ is
\begin{center}
{\begin{tikzpicture}
\node[] (C) at (-1.5,0)  {$1$};
\node[] (C) at (0,0)  {$2$};						
\node[] (C) at (1.5,0)  {$3.$};
\node[] (C) at (0.7,0.3)  {\qihao{(1,2)}};
\draw[<-,thick] (-0.2,0) -- (-1.3,0);
\draw[<-,thick] (0.2,0) -- (1.3,0);
\end{tikzpicture}}
\end{center}
Then $Aut(\A)\cong D_{\tau}=<f_-,f_+>\cong D_4$, where $f_-$ maps $\S$ to $\S_1$, $f_+$ maps $\S$ to $\S_7$, however, $f_0=(f_-f_+)^{4}=1$, thus $Aut(\A)$ is not the symmetry group of the bipartite belt.
\end{exm}

\begin{cor}\label{cor:only one bipartite belt}
Let $\A$ be a cluster algebra of finite type. 
\begin{itemize}
\item[1] If a quiver $Q$ of $\A$ is bipartite, then its underlying diagram $\overline{Q}$ must be a Dynkin diagram.
\item[2] All the bipartite seeds are on the bipartite belt in Definition \ref{def: bipartite belt}, more precisely, any bipartite seed is of the form (\ref{eq:big-circle-seeds+}) or of the form (\ref{eq:big-circle-seeds-}).
\item[3] There is only one bipartite belt on $E_\A$.
\end{itemize}
\end{cor}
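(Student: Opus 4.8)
The plan is to prove the three parts essentially in order, using that the results of the previous subsection pin down, in a uniform way, both the structure of $Aut(\A)$ and the action of $D_\tau$ on seeds. Throughout I would work with an arbitrary bipartite seed $\S'=(\x',B')$ of $\A$ and try to show it already lies on the bipartite belt through the fixed initial bipartite seed $\S$.

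For part (1), I would argue as follows. Suppose $Q$ is a bipartite quiver of $\A$, i.e.\ every vertex is a source or a sink. Since $\A$ is of finite type, by Theorem~1.4 of \cite{FZ03} there is \emph{some} seed whose Cartan counterpart is a finite type Cartan matrix; but I actually want this for $Q$ itself. The key point is that the Cartan counterpart $A(B(Q))$ of a bipartite exchange matrix is symmetrizable (because $B(Q)^{ex}$ is skew-symmetrizable), and its underlying diagram is $\overline{Q}$. One then invokes the finite-type criterion in the direction: if $\overline{Q}$ were not a Dynkin diagram, the cluster algebra defined by $B(Q)$ — which is the \emph{same} cluster algebra $\A$, just with a different initial seed — would be of infinite type, contradicting finiteness of $\A$. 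Here I would use that for a bipartite matrix the ``finite type Cartan counterpart'' condition is detectable directly from $\overline{Q}$, since every admissible sign choice gives the same Cartan counterpart up to simultaneous sign change, which does not affect $A(B)$. So $\overline{Q}$ is Dynkin.

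For part (2), let $\S'=(\x',B')$ be any bipartite seed. By part (1), $\overline{Q(\S')}$ is a Dynkin diagram, and since all seeds of $\A$ share the same exchange graph and the same combinatorial type, $\overline{Q(\S')}$ is the Dynkin diagram of $\A$; hence $B'\cong B$ or $B'\cong -B$ (a bipartite orientation of a Dynkin diagram is unique up to swapping sources and sinks). Now I want to conclude that $\S'$ is actually one of the seeds $\S_r$ of Definition~\ref{def: bipartite belt}. The idea is: the cluster $\x'$ of $\S'$ is an $n$-element subset of $\X$, hence via the bijection \eqref{equ:cluster variables and roots} corresponds to an $n$-subset of $\Phi_{\geq -1}$ that forms a cluster (a maximal compatible set, a ``positive/negative'' cluster in the sense of \cite{FZ03}); and since $B'\cong \pm B$, this subset of roots is the $\b$-vector set of a bipartite seed. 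By \eqref{sqcup of d-vectors} every cluster variable is of the form $x_{i;r}$ on the bipartite belt, and the bipartite belt seeds $\S_r$, $\S_{-r}$ exhaust exactly the seeds whose exchange matrix is $\pm B$ and whose cluster is $\mu_\pm$-reachable from $\S$; in finite type the belt closes up (it has $h+2$ seeds, by Lemma~\ref{lem:structure of tau group} together with the dihedral structure), and I would check that \emph{every} bipartite seed occurs there by a counting/orbit argument: $D_\tau$ acts transitively on the set of bipartite seeds because $D_\tau$ acts on the belt as its dihedral symmetry group and maps $\S$ to $\sigma(\S)$ for each $\sigma$, while conversely any bipartite seed $\S'$ with $B'\cong\pm B$ is, by the uniqueness of the cluster attached to a ``bipartite'' root-subset, equal to some $\sigma(\S)$. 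This last uniqueness is the crux; I would get it from the fact (implicit in \cite{FZ03}) that a seed is determined by its exchange matrix together with one cluster variable's compatibility data, or more cleanly, from the already-proven fact (Lemma~\ref{lem:relations between two groups}) that a cluster determines the seed combined with the explicit description of belt clusters via $\b(i;r)$ in \eqref{eq:dems-m-positive-tau}--\eqref{eq:dems-m-negative-tau}.

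Part (3) is then immediate: by part (2) every bipartite seed lies on \emph{a} belt of the form \eqref{eq:big-circle-seeds+}--\eqref{eq:big-circle-seeds-} starting from a bipartite seed, but all such belts are $D_\tau$-translates of one another (indeed by part (2) they all consist of the seeds $\sigma(\S)$, $\sigma\in D_\tau$), so there is just one. I expect the main obstacle to be part (2): making rigorous the claim that a bipartite seed is uniquely recovered from the root-theoretic data attached to its cluster, i.e.\ that two bipartite seeds with the same exchange matrix (up to sign) and hence ``the same shape'' must in fact coincide as vertices of $E_\A$; everything else is bookkeeping with the Dynkin classification, the finiteness of $E_\A$, and the dihedral structure of $D_\tau$ from Lemma~\ref{lem:structure of tau group}.
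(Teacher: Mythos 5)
There are two genuine gaps, one in each of your first two parts.

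In part (1) your argument rests on the claim that if $\overline{Q}$ is not Dynkin then ``the cluster algebra defined by $B(Q)$ would be of infinite type.'' But the finite-type criterion (Theorem~1.4 of \cite{FZ03}) is an existence statement: $\A$ is of finite type iff \emph{some} seed has a finite type Cartan counterpart. A finite type cluster algebra can perfectly well have seeds whose Cartan counterpart is not of finite type --- e.g.\ the cyclically oriented $3$-cycle seed in type $A_3$ has the affine $A_2^{(1)}$ Cartan counterpart. So a single bad seed does not force infinite type, and your remark about admissible sign choices addresses a different issue. Bipartiteness has to enter in an essential way here, and the point it must rule out is a bipartite (even) cycle occurring as a full subquiver. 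The paper does exactly this: if $Q$ is a tree, Proposition~9.3 of \cite{FZ03} gives the Dynkin conclusion; if $Q$ contains a cycle, that cycle is bipartite and Proposition~9.7 of \cite{FZ03} shows a bipartite cycle is not $2$-finite, contradicting finite type. Your proposal never makes this case distinction.

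In part (2) you correctly identify the crux --- that any bipartite seed $\S'$ with $B'\cong\pm B$ equals $\sigma(\S)$ for some $\sigma\in D_\tau$ --- but neither of your proposed ways to establish it works. The ``uniqueness of the cluster attached to a bipartite root-subset'' is not a true statement in any useful form: distinct belt seeds (e.g.\ $\S$ and $\S_2$) already share the same exchange matrix, so a bipartite seed is certainly not determined by its exchange matrix up to sign. And the fact that the $\b(i;r)$ exhaust $\Phi_{\geq-1}$ only says every cluster \emph{variable} occurs on the belt, not every bipartite \emph{seed}; most seeds of $\A$ are not on the belt even though all their variables are. Your transitivity claim for the $D_\tau$-action on bipartite seeds is then circular, since it presupposes the conclusion. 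The missing ingredient, which is how the paper closes the argument, is the already-computed structure of $Aut(\A)$: since $B'\cong B$ or $-B$, Lemma~\ref{lem:equivalent discription of clus-auto} produces a cluster automorphism $f$ with $f(\S)=\S'$; by Corollary~\ref{cor:simply laced auto group and tau group} and Theorem~\ref{thm:tau gp eq to auto gp for nonsimply laced} every element of $Aut(\A)$ induces an automorphism of $E_\A$ preserving the bipartite belt; hence $\S'=f(\S)$ lies on the belt. Part (3) is then immediate, as you say, but only once part (2) is actually secured.
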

\begin{proof}
\begin{itemize}
\item[1] If $Q$ is a tree, then from Proposition 9.3 in \cite{FZ03}, $\overline{Q}$ is a Dynkin diagram. Assume that $Q$ has a cycle $Q'$ as a full subquiver. Since $Q$ is bipartite, $Q'$ is a bipartite cycle. Then Proposition 9.7 in \cite{FZ03} shows that $Q'$ is not a quiver of finite type. This contradicts to the assumption that $\A$ is a finite type cluster algebra. Thus $\overline{Q}$ is a Dynkin diagram.
\item[2] Since a Dynkin diagram is a tree, it has only two kinds of bipartite quivers up to isomorphism. Then it follows from the first statement that $\A$ has only two kinds of bipartite quivers up to isomorphism, that is, the quivers in seeds (\ref{eq:big-circle-seeds+}) and (\ref{eq:big-circle-seeds-}). Let $Q$ be a bipartite quiver of $\A$, then Lemma \ref{lem:equivalent discription of clus-auto} yields that there must exist a cluster automorphism of $\A$ which maps $Q$ to a quiver in seeds (\ref{eq:big-circle-seeds+}) and (\ref{eq:big-circle-seeds-}). Note that on the one hand, Lemma \ref{lem:relations between two groups} states that each cluster automorphism of $\A$ gives an automorphism of the exchange graph $E_\A$, on the other hand, as in the proof of Theorem \ref{thm:tau induce auto}, any element in $D_\tau$ induces an automorphism of the bipartite belt in Definition \ref{def: bipartite belt}. Thus Corollary \ref{cor:simply laced auto group and tau group} and Theorem \ref{thm:tau gp eq to auto gp for nonsimply laced} yield that each cluster automorphism of $\A$ gives an automorphism of the bipartite belt in Definition \ref{def: bipartite belt}. Therefore $Q$ must be in the seeds (\ref{eq:big-circle-seeds+}) and (\ref{eq:big-circle-seeds-}). 
\item[3] This follows from the second part.
\end{itemize}
\end{proof}

\begin{rem}
In \cite{ASS12}, the authors compute the automorphism group of an acyclic skew-symmetric cluster algebra by computing the automorphism group of the $AR$-quiver of the cluster category \cite{BMRRT06}, which gives a categorification of the cluster algebra. As it is shown in \cite{BMV10}, one can use tube category $\C$ to categorify cluster algebra $\A$ of type $B_n$ (similar for type $C_n$ in \cite{ZZ14}). Similar to $\cite{ASS12}$, one can compute the cluster automorphism group of $\A$ by computing the automorphism group of the $AR$-quiver of $\C$. We lift the cluster automorphism $f_+f_-$ in $\tau$-transform group $D_{\tau}$ to the $AR$-transform $\tau$ of $\C$, where $\tau$ is an auto-equivalent functor of $\C$. The functor $\tau$ induces a rotation on the $AR$-quiver $Q_{\C}$ of $\C$. Since $Q_{\C}$ is a tube of rank $n+1$, its automorphisms are only rotations and reflections, and in fact they are all induced by elements in $D_{\tau}$. Thus by this way, we also have $Aut(\A)\cong D_{\tau}\cong D_{n+1}$.
\end{rem}

\section{FZ-universal cluster algebras}
\label{Sec universal cluster algebra}
For a cluster algebra with trivial coefficients, its universal cluster algebra is introduced in \cite{FZ07} (Definition 12.3). As a slight change of the universal cluster algebra, a universal geometric cluster algebra is defined in \cite{R14}, it is a geometric cluster algebra. Roughly speaking, a universal cluster algebra is a universal object in the set of cluster algebras with the same principal part, in the view point of coefficient specialization. A coefficient specialization is a ring homomorphism between two cluster algebras which commutes with mutations in both cluster algebras (see Definition 12.1 in \cite{FZ07}), it is a generalization of specialization defined in Definition \ref{def: specialization}. Then a universal cluster algebra $\A^{univ}$ is a cluster algebra such that for each cluster algebra $\A$ with the same principal part as $\A^{univ}$, there exists a coefficient specialization from $\A^{univ}$ to $\A$. Just like any other mathematical object with universal property, a universal cluster algebra is unique in some sense (see section 12 in \cite{FZ07}). Similarly, one can define a universal geometric cluster algebra, which has similar properties as a universal cluster algebra. We refer to \cite{FZ07,R14,CZ15} for the explicit definitions and the properties of these algebras. The following result will be used in the proof of the main theorem of this subsection:
\begin{lem}\label{lem:univ-cluster algebra}
Any universal geometric cluster algebra is gluing free.
\end{lem}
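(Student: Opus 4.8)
Lemma \ref{lem:univ-cluster algebra} asks us to show that a universal geometric cluster algebra $\A^{univ}$ is gluing free, i.e.\ in each seed no two frozen rows of the exchange matrix coincide. The plan is to argue by contradiction using the universal property. Suppose some seed of $\A^{univ}$ has two frozen cluster variables $x_j$ and $x_k$ that are strictly glueable, so $b_{ji}=b_{ki}$ for every exchangeable $x_i$. The key observation is that, because mutation of frozen rows is governed only by the exchangeable columns (the second case in Definition \ref{def: mutation}, applied with $j$ a frozen index, depends on $b_{jk}$ and $b_{ik}$ only), a strictly glueable pair stays strictly glueable under every mutation; this is exactly the statement (cited in the excerpt after Definition \ref{def: gluing free labeled seeds}) that gluing freeness is mutation invariant. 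So the failure of gluing freeness propagates to the whole cluster pattern.

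The main step is then to exhibit a cluster algebra $\A'$ with the same principal part as $\A^{univ}$ to which there is no coefficient specialization from $\A^{univ}$, contradicting universality. The natural candidate is the cluster algebra with principal coefficients, or more simply any geometric cluster algebra whose frozen row data ``separates'' the two indices — e.g.\ take $\A'$ with the same $B^{ex}$ but with frozen rows chosen so that the rows corresponding to $j$ and $k$ are distinct in the initial seed (for instance $\A^{univ}$ itself after deleting one of the two glued coefficients, or a principal-coefficient cluster algebra). The point is that a coefficient specialization $\varphi\colon \A^{univ}\to\A'$ must send the coefficient tuple of $\A^{univ}$ to that of $\A'$ compatibly with mutations, hence must respect the $Y$-variables (the coefficient group elements $\hat y_i$ attached to each exchangeable direction). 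Strict glueability of $x_j,x_k$ in $\A^{univ}$ forces the two corresponding pieces of coefficient data to contribute identically to every $\hat y_i$ in every seed, so their images under $\varphi$ are forced to be equal in every seed of $\A'$; choosing $\A'$ with those two contributions genuinely different in some seed yields the contradiction. Concretely, one uses the separation-of-additions formula (Theorem 3.7 of \cite{FZ07}) to see that the coefficients of $\A^{univ}$ are determined by the $\hat y$'s, and that strict glueability makes two of the generating coefficient variables redundant, contradicting the freeness/universality of $\A^{univ}$.

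Alternatively — and this is probably the cleanest route given what is quoted — one can avoid re-deriving anything by invoking the results of \cite{CZ15} directly: it is shown there (and recalled in the paragraph following Definition \ref{def: gluing free cluster algebras}) that gluing freeness is preserved by mutation, and in \cite{CZ15,R14} that a universal geometric cluster algebra has a ``maximal'' / freely generated coefficient semifield; a non-trivial strict glueing in some seed would, after passing to that seed, let one build a coefficient specialization collapsing the two glued coefficients, which is incompatible with the freeness of the universal coefficient group. So the structure of the proof is: (1) reduce to a single seed with a strictly glueable pair; (2) note mutation-invariance so the pair is glued in every seed; (3) use the universal property against a cluster algebra where those coefficients are unglued to get a contradiction.

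**Main obstacle.** The delicate point is step (3): making precise why strict glueability in $\A^{univ}$ obstructs universality. One must be careful that ``same principal part'' is the right comparison class and that the candidate $\A'$ really has the same $B^{ex}$; and one must track how a coefficient specialization acts on the full frozen data, using that it commutes with mutation in \emph{both} algebras. The bookkeeping with the $\hat y$-variables and the separation formula is where the real content sits, even though the individual manipulations are routine; everything else (mutation-invariance of gluing freeness, the reduction to one seed) is immediate from the cited results.
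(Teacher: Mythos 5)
Your reduction steps are fine: the mutation of a frozen row depends only on that row and on the exchangeable part of the matrix, so a strictly glueable pair stays strictly glueable in every seed. The problem is your main step (3). A coefficient specialization $\varphi\colon \A^{univ}\to\A'$ is a semifield map constrained only by the requirement that it send each $y_{j;t}$ (and $y_{j;t}\oplus 1$) of $\A^{univ}$ to the corresponding datum of $\A'$; it is not required to match frozen variables to frozen variables, and there is no canonical labelling of the frozen variables of $\A'$ by those of $\A^{univ}$. If two frozen rows of $\A^{univ}$ coincide, the two frozen generators enter every $y_{j;t}$, in every seed, only through their product, so the constraints on $\varphi$ involve only $\varphi(x_jx_k)$; the individual images $\varphi(x_j)$ and $\varphi(x_k)$ are left free, and nothing forces them to be equal. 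Gluing therefore gives $\varphi$ \emph{more} freedom, not less: it is a redundancy in the coefficient data, and it does not obstruct the existence of specializations out of $\A^{univ}$. No choice of target $\A'$ along the lines you describe will produce a contradiction with the universal property in that direction.

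The correct argument --- the one the paper uses, and which your ``alternative'' gestures at without making precise --- runs the other way. Collapse each strictly glueable class of frozen variables to a single representative; this yields a geometric cluster algebra $\A'$ with the same principal part and strictly fewer frozen variables, together with a coefficient specialization $\A'\to\A^{univ}$ (extending the inclusion of the surviving frozen variables). Composing this with the specializations out of $\A^{univ}$ shows that $\A'$ is itself a universal geometric cluster algebra, and since $\A'$ and $\A^{univ}$ have coefficient groups of different ranks, this contradicts the uniqueness of the universal object. So the contradiction is with uniqueness (equivalently, minimality) of $\A^{univ}$, not with the existence of some specialization out of it; the bookkeeping with $\hat y$-variables and the separation formula is not needed.
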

\begin{proof}
This is similar to the proof of Proposition 3.9 in \cite{CZ15}, where this is proved for universal geometric cluster algebras defined by extended skew-symmetric matrices.
The outline of the proof is as follows. Assume that the universal geometric cluster algebra $\A$ is not gluing free. Then, firstly, we divide the frozen cluster variables into disjoint strictly glueable sets. Secondly, we construct a new algebra $\A'$ from $\A$ by specializing frozen cluster variables to $1$, excepting one represent in each strictly glueable set (see the notion of strictly glueable in Definition \ref{def: gluing free labeled seeds}). In fact, $\A'$ is a cluster algebra given by a labeled seed $(\ex,\fx',B')$, where $B'$ is obtained from the exchange matrix $B$ in a fixed labeled seed $(\ex,\fx,B)$ of $\A$ by deleting frozen rows, excepting the represent row in each strictly glueable set, and $\fx'$ is obtained from $\fx$ by deleting the corresponding variables. Finally, one can extend the natural injection from $\ex\sqcup\fx'$ to $\ex\sqcup\fx$ to a coefficient specialization $\A'$ to $\A$. Then $\A'$ is another universal geometric cluster algebra with the same principal part as $\A$. This contradicts to the uniqueness of the universal geometric cluster algebra. Therefore the universal geometric cluster algebra $\A$ is gluing free.
\end{proof}
For a cluster algebra with trivial coefficients, the existence of its universal cluster algebra is not clear at all. However, in \cite{FZ07} (Theorem 12.4), S. Fomin and A. Zelevinsky prove that for a finite type cluster algebra $\A$, there exists a universal cluster algebra $\A^{univ}$, we call it FZ-universal cluster algebra of $\A$. Moreover $\A^{univ}$ is a geometric cluster algebra, and thus a universal geometric cluster algebra. Let $\S=(\x,B)$ be a seed of $\A$, where $B=(b_{ji})_{\bf{I}\times\bf{I}}$ is bipartite and $A(B)$ is of finite type.
Then $\A_{Q^{univ}}$ has a seed $\S^{univ}=(\x^{univ},B^{univ})$ (see section 12 \cite{FZ07}), where $B^{univ}=(\tilde{b}_{ji})_{({\bf{I}}\sqcup\Phi^\vee_{\geq -1})\times\bf{I}}$ is defined by:
\begin{equation}
\label{matrix of universal algebra}
\tilde{b}_{ji}=
\begin{cases}
b_{ji} & \text{if $j\in {\bf{I}}$}; \\ 
\varepsilon(i)[\alpha^\vee:\alpha^\vee_i] & \text{if $j=\alpha^\vee\in \Phi^\vee_{\geq -1}$}.
\end{cases}
\end{equation}

\begin{thm}\label{thm:auto gp of finit type univ alg}
For the finite type cluster algebra $\A$, we have $Aut(\A)\cong Aut(\A^{univ})$.
\end{thm}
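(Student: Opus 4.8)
The plan is to establish the isomorphism $Aut(\A)\cong Aut(\A^{univ})$ by showing the two inclusions $Aut(\A^{univ})\subseteq Aut(\A)$ and $Aut(\A)\subseteq Aut(\A^{univ})$, using the concrete description of $\A^{univ}$ via the matrix $B^{univ}$ in \eqref{matrix of universal algebra} together with the results of Section~\ref{Sec root system}. Since $\A^{univ}$ is a universal geometric cluster algebra, Lemma~\ref{lem:univ-cluster algebra} says it is gluing free, so Lemma~\ref{lem:relations between two groups} applies: the specialization $S'\colon\A^{univ}\to\A$ (sending all frozen variables, indexed by $\Phi^\vee_{\geq -1}$, to $1$) induces, for every cluster automorphism $f$ of $\A^{univ}$, a cluster automorphism $S'\circ f|_{\ex}$ of $\A$. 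This gives a group homomorphism $\rho\colon Aut(\A^{univ})\to Aut(\A)$, which is the inclusion $Aut(\A^{univ})\subseteq Aut(\A)$ once we check it is injective. Injectivity should follow because a cluster automorphism of $\A^{univ}$ is determined by its action on any single cluster, hence by its action on the exchangeable part of the initial cluster together with the induced automorphism of the initial exchange matrix (the frozen part being rigid by gluing-freeness, cf.\ the third key point in the proof of Lemma~\ref{lem:relations between two groups}); if $\rho(f)=\id$ then $f$ fixes $\ex$ pointwise and fixes $B^{univ}$, hence $f=\id$.

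For the reverse inclusion the key point is that every cluster automorphism $g$ of $\A$ lifts to a cluster automorphism $\hat g$ of $\A^{univ}$. Here I would use the explicit realization: the extra rows of $B^{univ}$ are indexed by $\Phi^\vee_{\geq -1}$, which by Lemma~\ref{lem:tau group}(2) is in $\tau_\pm$-equivariant bijection with $\Phi_{\geq -1}$, which in turn is in bijection with the cluster variable set $\X$ of $\A$ via \eqref{equ:cluster variables and roots}. By Corollary~\ref{cor:simply laced auto group and tau group} and Theorem~\ref{thm:tau gp eq to auto gp for nonsimply laced}, modulo the $D_{2n}$ exceptional factor, $Aut(\A)$ is generated by $f_+$ and $f_-$, which on $\X$ are realized by the piecewise-linear maps $\tau_+,\tau_-$ on $\Phi_{\geq -1}$; one then defines $\hat g$ on the initial seed $\S^{univ}$ by letting $g$ act on $\ex$ as before and permuting the frozen rows indexed by $\alpha^\vee\in\Phi^\vee_{\geq -1}$ according to the corresponding permutation of $\Phi_{\geq -1}$ induced by $g$. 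The content to verify is that this assignment is compatible with the coefficient data in \eqref{matrix of universal algebra}, i.e.\ that the permutation of frozen rows together with the (possible) sign change $B\mapsto -B$ sends $B^{univ}$ to an isomorphic exchange matrix of $\A^{univ}$; this is where the $\tau_\pm$-equivariance of $\alpha\mapsto\alpha^\vee$ and the $\pm$-bipartite bookkeeping of $\varepsilon(i)[\alpha^\vee:\alpha^\vee_i]$ enter. For the exceptional $D_{2n}$ case one additionally checks that the permutation $f'$ of negative simple roots in \eqref{eq: f'} lifts compatibly, which it does because it permutes the $\alpha^\vee_i$ among themselves and commutes with $D_\tau$. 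Then $g\mapsto\hat g$ is a homomorphism $Aut(\A)\to Aut(\A^{univ})$ inverse to $\rho$, finishing the proof.

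The main obstacle I expect is the bookkeeping in the last verification: showing that the lifted map $\hat g$ genuinely sends $\S^{univ}$ to a seed of $\A^{univ}$, i.e.\ that after permuting the $\Phi^\vee_{\geq -1}$-indexed rows and possibly negating, the resulting extended matrix is $B^{univ}$ (or $-B^{univ}$) up to the relabeling allowed by Definition~\ref{def:seeds}. This reduces to the identity, for the generators $f_\pm$, that $\varepsilon(i)[\tau_\pm(\alpha)^\vee:\alpha^\vee_i] = -\varepsilon'(i)[\alpha^\vee:\alpha^\vee_{i'}]$ under the induced relabeling $i\mapsto i'$ of $\ex$, with the sign coming from $\mu_\pm(B)=-B$; this should follow by combining the formula \eqref{tau} for $\tau_\varepsilon$ on coefficients with Lemma~\ref{lem:tau group}(2) and the mutation rule for coefficient rows of a geometric cluster algebra. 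An alternative, cleaner route that avoids some of this is to invoke the uniqueness of the universal cluster algebra together with Lemma~\ref{lem:relations between two groups}(1): any $g\in Aut(\A)$ transports the whole universal structure to another universal geometric cluster algebra with principal part $\A$, which by uniqueness is isomorphic to $\A^{univ}$ via an isomorphism respecting the seed data, and this isomorphism is exactly $\hat g$. I would likely present the argument in this second form, relegating the matrix computation to a remark.
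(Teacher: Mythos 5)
Your concrete route is essentially the paper's own proof: the inclusion $Aut(\A^{univ})\subseteq Aut(\A)$ via Lemma \ref{lem:univ-cluster algebra} and Lemma \ref{lem:relations between two groups}, and the reverse inclusion by lifting $f_\pm$ (and, in type $D_{2n}$, the extra involution) to $\A^{univ}$ using the permutation of the $\Phi^\vee_{\geq -1}$-indexed frozen rows induced by $\tau_\pm$; the ``bookkeeping identity'' you isolate is exactly the computation the paper carries out in (\ref{equ:univ}), showing $\mu_-(\tilde b_{\alpha^\vee j})=-\varepsilon(j)[\tau_+(\alpha^\vee):\alpha^\vee_j]$ and hence $\mu_-(B^{univ})\cong -B^{univ}$. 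However, I would warn you against your stated preference for presenting the argument in the second, ``abstract uniqueness'' form and relegating this computation to a remark. The uniqueness of the universal (geometric) cluster algebra is uniqueness in the category of coefficient specializations: transporting the universal structure along $g\in Aut(\A)$ does produce an isomorphism $\A^{univ}\to\A^{univ}$ in that category, but such an isomorphism acts on the coefficient semifield by an invertible monomial transformation and need not permute the frozen cluster variables; so it is not automatic that it is a cluster automorphism in the sense of Definition \ref{cluster automorphisms} (which requires frozen variables to go to frozen variables). Closing that gap amounts to verifying that the induced map on the initial seed is a genuine seed isomorphism, which is precisely the matrix verification you were hoping to avoid. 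So the explicit check of (\ref{equ:univ}) (together with the $\tau_\pm$-equivariance of $\alpha\mapsto\alpha^\vee$ and the $D_{2n}$ case) should remain the body of the proof, as it is in the paper.
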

\begin{proof}
From Lemma \ref{lem:relations between two groups} and Lemma \ref{lem:univ-cluster algebra}, $Aut(\A^{univ})$ is a subgroup of $Aut(\A)$. Thus we only need to show that $Aut(\A)\subseteq Aut(\A^{univ})$. Firstly, we prove that $D_\tau\subseteq Aut(\A^{univ})$, equivalently, both $f_-$ and $f_+$ induce cluster automorphisms of $\A^{univ}$. We only prove the case of $f_-$, the case of $f_+$ can be proved similarly. Considering the mutation $\mu_-(B^{univ})$, we can see that $\mu_-$ acts on the principal part of $B^{univ}$, for any $i,j\in {\bf{I}}$, by:
\begin{equation}\label{equ:univ1}
\mu_-(\tilde{b}_{ji})=-\tilde{b}_{ji}.
\end{equation}
Since $\S$ is bipartite, by the definition of $\tilde{B}$, we also check that:
\begin{equation}\label{equ:univ}
\begin{array}{lll}
\mu_-(\tilde{b}_{\alpha^\vee j})&=&
\begin{cases}
-\tilde{b}_{\alpha^\vee j} & \text{~~~~~~~~~~~~~~~~~~if $\varepsilon(i)=-1$}; \\ 
\tilde{b}_{\alpha^\vee j}+\Sigma_{k\neq j}[\tilde{b}_{\alpha^\vee k}]_+\tilde{b}_{kj} & \text{~~~~~~~~~~~~~~~~~~if $\varepsilon(i)=1$}.
\end{cases}\\
&=&\begin{cases}
-[\alpha^\vee:\alpha^\vee_j] & \text{if $\varepsilon(i)=-1$}; \\ 
-[\alpha^\vee:\alpha^\vee_j]-\sum_{k\neq j}\tilde{b}_{kj}[\alpha^\vee:\alpha^\vee_k]_+ & \text{if $\varepsilon(i)=1$}.
\end{cases}\\
&=&-\varepsilon(j)[\tau_+(\alpha^\vee):\alpha^\vee_j],
\end{array}
\end{equation}
where the last equality is due to equality (\ref{tau}). As a permutation of $\Phi^\vee_{\geq -1}$, $\tau_+$ induces a permutation of frozen rows of $B^{univ}$. Then the equality (\ref{equ:univ1}) and the equality (\ref{equ:univ}) yield that, up to a sign `$-$', $\mu_-$ preserves the principal part of $B^{univ}$ and induces a permutation of frozen rows of $B^{univ}$, and thus there is an isomorphism $\mu_-(B^{univ})\cong -B^{univ}$ under this permutation.\\

Denote by $\x^{univ}=\{x_j,j\in{\bf{I}};x_{\alpha^\vee},\alpha^\vee\in \Phi^\vee_{\geq -1}\}$ the initial cluster of $\A^{univ}$, define a map $f'_-: \x^{univ}\to \x^{univ}$ by:
\begin{equation*}
f'_-(x_j)=\begin{cases}
\mu_-(x_j) & \text{if $j\in{\bf{I}}$}; \\ 
x_{\tau_+(\alpha^\vee)} & \text{if $\alpha^\vee\in \Phi^\vee_{\geq -1}$}.
\end{cases}
\end{equation*}
Then $f'_-(B^{univ})\cong -{B^{univ}}$. Therefore $f'_-$ induces an inverse cluster automorphism of $\A^{univ}$. Similarly we define $f'_+$, which also induces an inverse cluster automorphism of $\A^{univ}$. Note that $f_-=S'\circ f'_-|_{\x}$, where $S'$ is the specialization from $\A^{univ}$ to $\A$ defined in Definition \ref{def: specialization}. Thus if $f'_-f'_+=1$, then $f_-f_+=1$. Therefore $D_\tau = <f_-,f_+> \subseteq <f'_-,f'_+>\subseteq Aut(\A^{univ})$. If $D_\tau\cong Aut(\A)$, then we have $Aut(\A)\subseteq Aut(\A^{univ})$. Now assume that $D_\tau\subsetneqq Aut(\A)$, by Corollary \ref{cor:simply laced auto group and tau group} and Theorem \ref{thm:tau gp eq to auto gp for nonsimply laced}, $\A$ is of Dynkin type $D_{2n}( n\geqslant3)$ or of Dynkin type $D_4$. For the type $D_{2n} (n\geqslant3)$, exchanging initial cluster variables $x_{-\alpha_{2n-1}}$ and $x_{-\alpha_{2n}}$ induces a direct cluster automorphism  $f$ of $\A$ (see (\ref{eq: f'})), where the rank of $f$ is two. We define a permutation $\sigma$ of $\x^{univ}=\{x_j,j\in{\bf{I}};x_{\alpha^\vee},\alpha^\vee\in \Phi^\vee_{\geq -1}\}$ by:
\begin{equation*}
\sigma(x_j)=\begin{cases}
x_{-\alpha_{2n-1}} & \text{if $j=-\alpha_{2n}$;} \\ 
x_{-\alpha_{2n}} & \text{if $j=-\alpha_{2n-1}$;}\\ 
x_{\alpha^\vee_{2n-1}} & \text{if $j=\alpha^\vee_{2n}$}; \\ 
x_{\alpha^\vee_{2n}} & \text{if $j=\alpha^\vee_{2n-1}$};\\ 
x_j & \text{otherwise}.
\end{cases}
\end{equation*}
Since $\alpha^\vee_{2n-1}$ and $\alpha^\vee_{2n}$ are symmetric in $\Phi^\vee_{\geq -1}$, $B(\sigma(\x^{univ}))\cong-B(\x^{univ})$. Therefore $\sigma$ gives a direct cluster automorphism $f'$ of $\A^{univ}$, where the rank of $f'$ is two. Thus $Aut(\A)\cong D_{\tau}\times <f>\subseteq <f'_-,f'_+>\times <f'>\subseteq Aut(\A^{univ})$. Similarly we can prove the case of type $D_4$. Then $Aut(\A)\subseteq Aut(\A^{univ})$, and finally, we have $Aut(\A)\cong Aut(\A^{univ})$.
\end{proof}

\section*{Acknowledgements}
Both authors thank for anonymous reviewer's careful reading and many valuable suggestions. In particular, Corollary 3.6 comes from a question raised by the reviewer. 

\bigskip\bigskip

{\small Wen Chang\\
School of Mathematics and Information Science, Shaanxi Normal University, Xi'an 710062, China \&\\
Department of Mathematical Sciences, Tsinghua University, Beijing 10084, China\\
Email: {\tt changwen161@163.com}

	\bigskip
Bin Zhu\\
Department of Mathematical Sciences, Tsinghua University, Beijing 10084, China\\
Email: {\tt bzhu@math.tsinghua.edu.cn}}

\end{document}